\documentclass[10pt]{amsproc}

\usepackage{graphicx}

\usepackage{tikz}
\usetikzlibrary{calc}

\usepackage{amsmath}
\usepackage{amsthm}
\usepackage{amsfonts}
\usepackage{amssymb}
\usepackage{bigints}
\usepackage{mathtools}
\usepackage{microtype}

\DeclarePairedDelimiter\abs{\lvert}{\rvert}%
%

\copyrightinfo{}{}

\newtheorem{theorem}{Theorem}[section]
\newtheorem{lemma}[theorem]{Lemma}
\newtheorem{prop}[theorem]{Proposition}
\newtheorem{cor}{Corollary}

\theoremstyle{definition}

\theoremstyle{remark}
\newtheorem{remark}[theorem]{Remark}

\theoremstyle{definition}

\numberwithin{equation}{section}

\begin{document}

\title[Affine subspaces of curvatures from closed curves]{Affine subspaces of curvature functions from closed planar curves}



\author{Leonardo Alese}
\address{TU Graz, Department of Mathematics, Institute of Geometry }
\curraddr{}
\email{alese@tugraz.at}

\subjclass[2010]{Primary 53A04}

\date{\today}



\begin{abstract}
Given a pair of real functions $(k,f)$, we study the conditions they must satisfy for $k+\lambda f$ to be the curvature in the arc-length of a closed planar curve for all real $\lambda$. Several equivalent conditions are pointed out, certain periodic behaviours are shown as essential and a family of such pairs is explicitely constructed. The discrete counterpart of the problem is also studied. Finally, the characterization obtained is used to show that a sufficient analogue of the 4-vertex theorem cannot be developed.
\end{abstract}

\maketitle

\section{Introduction.}

Closed curves are natural mathematical objects which have been studied since a long time. In his well-known paper \cite{fenchel} Fenchel makes the following comment about the study of the geometric properties of a space curve which depend on the assumption that the curve is closed.

\begin{quote}
\textit{The results are often comparatively elementary and seem to be isolated. On the other hand, the intuitive character of the statements and the lack of a general method of approach make the field attractive...}
\end{quote}
In this paper we focus on closed planar curves but the consideration above identifies pretty well the context of our contribution. In this area interesting questions keep on coming up, as in the recent \cite{alese2020}, where a surprising result on permuting arcs of a $C^1$ curve with the goal to make the curve closed is proven with elementary topological tools.

The natural and complete geometric descriptor we associate to a curve is its \textit{curvature}. If $\gamma \in C^2([0,2\pi],\mathbb{R}^2)$ is an \textit{arc-length parametrized planar curve}, \textit{i.e.}, a twice differentiable function from the interval $I:=[0,2\pi]$ to the real plane such that the norm of its first derivative $\| \gamma'  \|$ is constantly equal to $1$, we can define a \textit{turning angle} function $\theta$ that satisfies $\gamma'(t)=(\cos \theta(t), \sin \theta(t))=\mathrm{e}^{i\theta(t)}$, where $\mathbb{R}^2$ has been identified with the complex plane $\mathbb{C}$. The \textit{curvature} $k$ of $\gamma$ is defined as the first derivative $\theta'$ of the turning angle function. The other way round, given a continuous curvature $k\in C^0(I, \mathbb{R})$ we can reconstruct by integration, uniquely up to rigid motions, the curve it comes from. In fact, $\theta(t)=\int_0^t k(s)\mathrm{d}s + C$ and $\gamma(t)=\int_0^t \mathrm{e}^{i\theta(s)}\mathrm{d}s + V$. For a more extensive treatment of the subject the reader may refer to \cite{docarmo}.

Given another function $f\in C^0(I,\mathbb{R})$, the main question we are interested in this paper is: \begin{quote}
What are the conditions on $k$ and $f$ for $k+\lambda f$ to be the curvature of a closed curve for all $\lambda \in \mathbb{R}$? 
\end{quote} 
Here and in the following with \textit{closed} we just mean that starting and end point of the curve coincide (we will see though that the nature of the problem entails much stiffer relations also on the derivatives at the extreme points of the curve). Figure \ref{imgintro} visualizes the objects we are going to study. 

Interpolation of curvature functions is a tool used in computer graphics to gradually transform one curve into another, while mantaining the length of the curve \cite{metam}. This method does not perform well when it comes to deform closed curves, since there is no guarantee that the intermediate curves are closed as well; from the point of view of computer graphics this problem can be fixed by approximating the transition curves with closed ones that are not too far away from them \cite{curvbased}. In this paper we approach the problem from the theoretical perspective, exploring the conditions guaranteeing that all the curves are closed over the interpolation of the curvature functions. 

In the more general framework of deformations, the evolution of curves under the action of different flows has been studied: in \cite{flows} a curvature-based flow is used to transform a shape into another while preserving the length; the closedness of the curve over the process is guaranteed by an extra projection step to the hyperspace of $L^2$ defined by the constraint $\int_0^{2\pi} k'(s) \gamma(s) \mathrm{d}s = 0$ relating the position of the curve and the first derivative of its curvature, which is interestingly proven as a necessary and sufficient condition for a curve to be closed.

As for an outline of the contents, \S \ref{first} presents the main characterization theorem, proving also that the existence of a single affine line of curvature functions from closed curves is equivalent to the existence of an infinite dimensional affine space of such functions. In \S \ref{per} periodicity properties of $k$ and $f$ are shown. On the existence side, \S \ref{secexistence} deals with the explicit construction of pairs of analytic function $(k,f)$ that satisfy our constraints. In \S \ref{discrete} we discuss the discrete case. Finally, in \S \ref{nonex} a \textit{hardness} result on the task of telling whether a curve is closed by looking at its curvature $k$ is obtained. We show that it is not possible to develop a procedure that tells whether the associated curve is closed or not by accessing finitely many evaluations and/or level sets of $k$, its derivatives and its antiderivatives. 

\begin{figure}
\begin{tikzpicture}
    \node[anchor=south west,inner sep=0] at (3,0) {\scalebox{0.42}{\includegraphics[width=\textwidth]{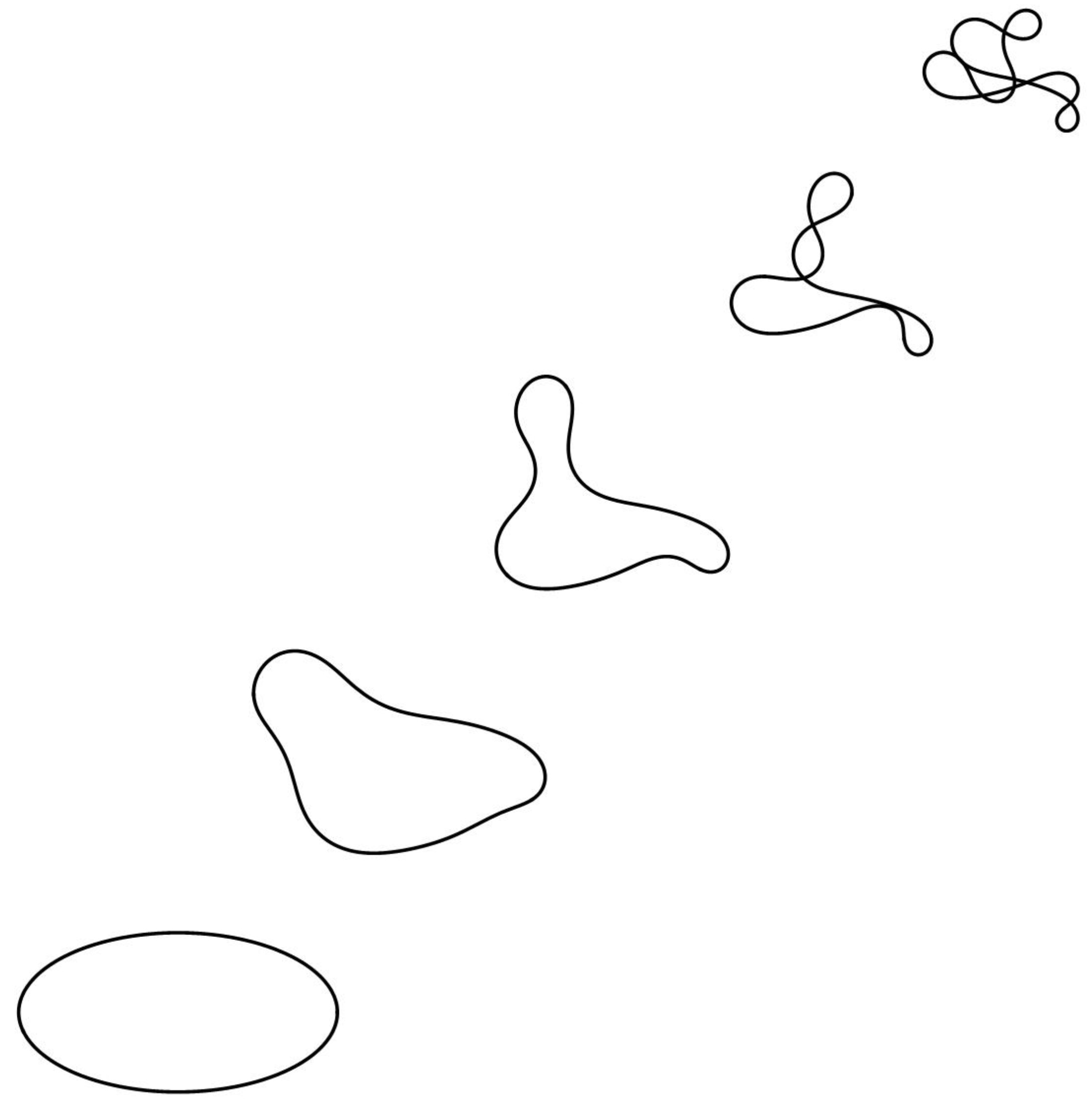}}};
    \draw[gray,very thick,->] (1.3,0.3) -- (5.4,5.2);
    \node[align=left] at (0.5,4.5) {Affine line of\\ curvature functions};
    \node[align=left] at (7,0.5) {Family of\\ closed curves};
    \draw[gray, thick] (1.4654,0.5) circle (0.05cm);
    \node[align=left] at (1.227,0.6) {$k$};
    \node[align=left] at (2.5,3.1) {$k+\lambda f$};
\end{tikzpicture}
\caption{An ellipse is deformed by adding multiple of $f$ to its curvature $k$. If, as in this case, $f$ is chosen properly, then all the curves of the family are closed. We want to study the constraints $k$ and $f$ must satisfy to present such a behaviour.} \label{imgintro}
\end{figure}


\section{Equivalent characterizations of closedness.} \label{first}
Let $\gamma$ be a closed $C^2$ curve defined on the interval $I=[0,2\pi]$, $\theta$ its associated turning angle function and $k=\theta'$ its curvature. For $f\in C^0(I, \mathbb{R})$, we want to answer the question: what are the conditions on $f$ for $k+\lambda f$ to be the curvature of a closed curve for all $\lambda \in \mathbb{R}$? Calling $\phi(t):=\int_0^t f(s)\mathrm{d}s$, this is equivalent to
$$
\int_0^{2\pi} \mathrm{e}^{i(\theta(t)+\lambda \phi(t))} \mathrm{d}t=0, \;\;\;\;\;\; \forall \; \lambda \in \mathbb{R}.
$$
The function $F(\lambda):=\int_0^{2\pi} \mathrm{e}^{i(\theta(t)+\lambda \phi(t))}\mathrm{d}t$ is analytic in $\lambda$. This can be seen for example by giving the following explicit entire expansion for the real part of $F$ (the imaginary part is analogous):
$$
F_1(\lambda)=\sum_{c=0}^{\infty} c_n \frac{\lambda^n}{n!}, \;\; \text{with } c_n =
\begin{cases}
(-1)^{\frac{n}{2}} \;\;\; \int_0^{2\pi} \phi(t)^n \cos\theta(t)
\mathrm{d}t, & \;\; \text{if } n \text{ is even}, \\
(-1)^{\frac{n+1}{2}} \int_0^{2\pi} \phi(t)^n \sin\theta(t)\mathrm{d}t, & \;\; \text{if } n \text{ is odd}.
\end{cases}
$$
This observation alone is enough to conclude the first of our equivalent conditions.
\begin{lemma}
Let $k,f \in  C^0(I,\mathbb{R})$. Then the curve with curvature $k+\lambda f$ is closed $\forall \; \lambda \in \mathbb{R}$ $\Leftrightarrow$ we have the equality
\begin{equation} \label{cond1}
\int_0^{2\pi} \mathrm{e} ^{i \theta (t)}   \phi (t)^n \mathrm{d}t=0,  \;\;\; \forall \; n \in \mathbb{N}_0, 
\end{equation}
where $\theta(t)=\int_0^t k(s)\mathrm{d}s$ and $\phi(t) = \int_0^t f(s) \mathrm{d}s$. 
\end{lemma}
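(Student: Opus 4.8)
The plan is to exploit the analyticity of $F$ already recorded above, so that the single functional equation ``$F(\lambda)=0$ for all $\lambda$'' collapses into a countable family of conditions on the Taylor coefficients of $F$ at the origin. By the reformulation of closedness given just before the statement, the curve with curvature $k+\lambda f$ is closed for every $\lambda$ precisely when $F(\lambda)=\int_0^{2\pi}\mathrm{e}^{i(\theta(t)+\lambda\phi(t))}\,\mathrm{d}t$ vanishes for every $\lambda\in\mathbb{R}$. Hence the lemma is exactly the assertion that $F$ vanishes identically on $\mathbb{R}$ if and only if all the integrals in \eqref{cond1} vanish, and I would organize the whole argument as a chain of equivalences.

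First I would reduce ``$F\equiv 0$'' to a coefficientwise condition. The forward implication is immediate: if $F\equiv 0$ on $\mathbb{R}$ then $F^{(n)}(0)=0$ for every $n$. For the converse I would invoke the fact, established above through the explicit entire expansion of $F_1$ and its imaginary analogue, that $F$ extends to an entire function of $\lambda$; a real-analytic function all of whose Taylor coefficients at a point vanish is identically zero. Thus both directions rest on the equivalence $F\equiv 0 \Leftrightarrow F^{(n)}(0)=0$ for all $n\in\mathbb{N}_0$.

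The remaining step is to identify $F^{(n)}(0)$ with the integral in \eqref{cond1}. I would differentiate under the integral sign $n$ times --- legitimate because the integrand $\mathrm{e}^{i(\theta+\lambda\phi)}$ and each of its $\lambda$-derivatives are jointly continuous on the compact set $[0,2\pi]\times[-R,R]$, which supplies the uniform bound needed for a dominated-convergence argument --- to obtain
\begin{equation*}
F^{(n)}(\lambda)=\int_0^{2\pi}\bigl(i\phi(t)\bigr)^n\,\mathrm{e}^{i(\theta(t)+\lambda\phi(t))}\,\mathrm{d}t,
\end{equation*}
so that $F^{(n)}(0)=i^n\int_0^{2\pi}\phi(t)^n\,\mathrm{e}^{i\theta(t)}\,\mathrm{d}t$. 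Since $i^n\neq 0$, vanishing of $F^{(n)}(0)$ is equivalent to vanishing of $\int_0^{2\pi}\mathrm{e}^{i\theta(t)}\phi(t)^n\,\mathrm{d}t$, and splitting into real and imaginary parts recovers precisely the coefficients $c_n$ displayed before the statement, which serves as a consistency check. Chaining this with the previous paragraph yields \eqref{cond1} in both directions.

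I expect no serious obstacle, since analyticity --- the only delicate ingredient --- has already been supplied by the explicit power series. The single point deserving care is the justification of differentiation under the integral; this is routine given the compactness of $I$ and the smoothness of the exponential, but it is exactly what makes rigorous the passage from the global identity $F\equiv 0$ to the coefficientwise conditions.
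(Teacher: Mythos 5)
Your argument is correct and follows essentially the same route as the paper: use the analyticity of $F$ in $\lambda$ to reduce $F\equiv 0$ to the vanishing of all $F^{(n)}(0)$, then compute $F^{(n)}(0)=i^n\int_0^{2\pi}\mathrm{e}^{i\theta(t)}\phi(t)^n\,\mathrm{d}t$ by differentiating under the integral sign. Your added justification of the interchange of differentiation and integration (joint continuity on a compact set) is a slightly more explicit version of the paper's appeal to the Leibniz integral rule.
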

\begin{proof}
An analytic function is everywhere $0$ if and only if all of its derivatives vanish in at least one point. We conclude by computing the $n$-th derivative of $F$ and evaluating it in $\lambda=0$, obtaining $0=F^{(n)}(0)= i^n \int_0^{2\pi} \mathrm{e} ^{i \theta (t)}   \phi (t)^n \mathrm{d}t$. Note that we could take the derivative within the integral thanks to the Leibniz integral rule.
\end{proof}

We want now to better understand this condition, by discussing some of its implications. Our main tool will be an approximation argument based on the observation that, if $\phi$ satisfies the condition above, then for any $N \in \mathbb{N}_0$ and $(c_j)_{j\in\{1,...,N\}} \in \mathbb{R}^{N+1}$, also the sum $\sum_{j=0}^N c_j \phi^j$ does.

\begin{lemma} \label{composition}
$\theta,\phi \in C^1(I,\mathbb{R})$ satisfy condition (\ref{cond1}) $\Leftrightarrow$ $\theta,g(\phi)$ (composition of functions) satisfy condition (\ref{cond1}), for any $g$ bounded and integrable.
\end{lemma}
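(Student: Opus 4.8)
The plan is to treat the two implications separately, with essentially all the work lying in the forward direction. The reverse implication is immediate: since $\phi$ is continuous on the compact interval $I$, its image $\phi(I)$ is a compact interval $[a,b]$, on which the identity map is bounded and continuous, hence integrable; applying the hypothesis with $g=\mathrm{id}$ gives $g(\phi)=\phi$ and recovers (\ref{cond1}) for the pair $\theta,\phi$. For the forward direction I would first reduce the claim to a single cleaner statement. Observe that $\big(g(\phi(t))\big)^n=\big(g^n\big)(\phi(t))$, where $g^n$ denotes the pointwise $n$-th power of $g$, which is again bounded and integrable whenever $g$ is. Hence it suffices to prove that condition (\ref{cond1}) forces
\[
\int_0^{2\pi}\mathrm{e}^{i\theta(t)}\,h(\phi(t))\,\mathrm{d}t=0
\]
for every bounded integrable $h$, and then apply this with $h=g^n$ for each $n\in\mathbb{N}_0$.

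The natural device for this reduction is the pushforward measure. I would define the finite complex Borel measure $\mu$ on $[a,b]=\phi(I)$ by $\mu(A):=\int_{\phi^{-1}(A)}\mathrm{e}^{i\theta(t)}\,\mathrm{d}t$, so that $\int_0^{2\pi}\mathrm{e}^{i\theta(t)}h(\phi(t))\,\mathrm{d}t=\int_{[a,b]}h\,\mathrm{d}\mu$ for every bounded Borel $h$; since $\abs{\mu}([a,b])\le 2\pi$, the measure $\mu$ is indeed finite. Condition (\ref{cond1}) says precisely that all moments of $\mu$ vanish, $\int_{[a,b]}x^n\,\mathrm{d}\mu(x)=0$ for all $n$, which is exactly the author's observation that every polynomial in $\phi$ inherits (\ref{cond1}). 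As $\mu$ is supported on a compact set, the Weierstrass approximation theorem lets me pass from polynomials to all of $C([a,b])$: the functional $h\mapsto\int h\,\mathrm{d}\mu$ is continuous for the sup-norm and vanishes on the dense subspace of polynomials, hence vanishes on every continuous $h$.

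The final step, which I expect to be the genuine obstacle, is to upgrade ``vanishes on continuous functions'' to ``vanishes on all bounded integrable functions'' — this is where mere uniform polynomial approximation is not enough, since a discontinuous $h$ cannot be approximated uniformly in the sup-norm. Here I would invoke the uniqueness part of the Riesz representation theorem: a finite regular complex Borel measure on the compact metric space $[a,b]$ that annihilates $C([a,b])$ must be the zero measure. Once $\mu=0$, the integral $\int_{[a,b]}h\,\mathrm{d}\mu$ vanishes for \emph{every} bounded integrable $h$ automatically, in particular for $h=g^n$, which establishes (\ref{cond1}) for the pair $\theta,g(\phi)$ and completes the argument.
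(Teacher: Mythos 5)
Your proof is correct, and it reaches the conclusion by a genuinely different (though closely related) route. The paper argues directly in $L^1(I,\mathrm{d}t)$: it invokes the density of polynomials to produce, for each $n$ and $\varepsilon$, a polynomial $p_{n,\varepsilon}$ with $\int_0^{2\pi}\abs{g(\phi(t))^n-p_{n,\varepsilon}(\phi(t))}\,\mathrm{d}t<\varepsilon$, notes that $\int_0^{2\pi}\mathrm{e}^{i\theta}p_{n,\varepsilon}(\phi)\,\mathrm{d}t=0$ by linearity from condition (\ref{cond1}), and finishes with the triangle inequality. You instead package the whole situation into the pushforward complex measure $\mu=\phi_*(\mathrm{e}^{i\theta}\,\mathrm{d}t)$ on $\phi(I)$, read condition (\ref{cond1}) as the vanishing of all moments of $\mu$, and conclude $\mu=0$ via Weierstrass plus the uniqueness part of the Riesz representation theorem. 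Both arguments ultimately rest on polynomial density on a compact interval, but yours makes explicit the measure-theoretic step that the paper leaves implicit: the paper's unquantified ``density property of polynomials in our class of functions'' is really density of polynomials in $L^1$ of the pushforward of Lebesgue measure under $\phi$, which is what lets a merely bounded, integrable (possibly discontinuous) $g^n$ be approximated; your Riesz step does exactly this work in one clean stroke and yields the stronger statement that $\int\mathrm{e}^{i\theta}h(\phi)\,\mathrm{d}t=0$ for \emph{every} bounded Borel $h$ simultaneously, so the quantifier over $n$ becomes trivial. The trade-off is that the paper's version is more elementary and self-contained, while yours imports Riesz uniqueness; the one technicality both treatments share and neither resolves is that $g$ should be Borel (not merely Lebesgue) measurable for $g\circ\phi$ and the change-of-variables identity to be unproblematic.
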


\begin{proof}
The `if' part is trivial. For the `only if' we use a density property of polynomials in our class of functions to approximate $g$. More explicitely, for any $n \in \mathbb{N}_0$ and $\varepsilon > 0$, there exists a polynomial $p_{n,\varepsilon}$ of degree $N(n,\varepsilon)$ such that 
$$
\int_0^{2\pi}  \big| g(\phi(t))^n - p_{n,\varepsilon}(\phi(t)) \big| \mathrm{d}t  <\varepsilon, 
$$ 
which implies
\begin{align*}
&\abs*{\int_0^{2\pi} \mathrm{e}^{i \theta(t)} g(\phi(t))^n \mathrm{d}t }  \\
\leq &  \abs*{\int_0^{2\pi} \mathrm{e}^{i \theta(t)} \bigl( g (\phi(t))^n - p_{n,\varepsilon}(\phi(t))\bigl) \mathrm{d}t}+  \abs*{     \int_0^{2\pi} \mathrm{e}^{i \theta(t)} p_{n,\varepsilon}(\phi(t))  \mathrm{d}t  } \\
 \leq & \int_0^{2\pi}  \big| g (\phi(t))^n - p_{n,\varepsilon}(\phi(t)) \big| \mathrm{d}t \leq \varepsilon. \qedhere
\end{align*}
\end{proof}

By Lemma \ref{composition}, the existence of $\phi$ satisfying (\ref{cond1}) implies the existence of an infinite-dimensional affine space through $\theta$ whose elements satisfy (\ref{cond1}) as well. From the perspective of the curvature, what we are saying here is that, choosing $g$ to be $C^1$, we can pass from $f$ to $fg'(\phi)$ and still have that the curves with curvature functions $k+\lambda fg'(\phi)$ are closed for all $\lambda$.

Before moving to the next lemma, which provides a much more local characterization of our constraint, it is convenient to recall that a level set $\phi^{-1}(a)=\{t \mid \phi(t)=a \}$ consists of isolated points, if $\phi'(t)\neq 0$ for all $t \in \phi^{-1}(a)$. For $\phi$ defined on a compact interval $I$, level sets of such regular values are therefore finite.


\begin{lemma}
If $\theta,\phi \in C^1(I,\mathbb{R})$ satisfy condition (\ref{cond1}), then we have the implication

\begin{align} \label{cond3}
a \neq \phi(0),\phi(2\pi) \text{ is a regular value of } \phi \Rightarrow \sum_{b\in \phi^{-1}(a)} \frac{\mathrm{e}^{i\theta(b)}}{|\phi'(b)|}=0.    
\end{align}

\end{lemma}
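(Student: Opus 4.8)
The plan is to feed Lemma~\ref{composition} a family of weights that localize at the level $a$ and then pass to a limit. Since the characteristic function $\chi_{(a-\epsilon,a+\epsilon)}$ is bounded and integrable, Lemma~\ref{composition} (in its instance $n=1$) gives
\[
\int_0^{2\pi} \mathrm{e}^{i\theta(t)}\,\chi_{(a-\epsilon,a+\epsilon)}(\phi(t))\,\mathrm{d}t = 0 \qquad \text{for every } \epsilon>0.
\]
My goal is to show that, after dividing by $2\epsilon$, the left-hand side converges as $\epsilon\to 0$ exactly to the sum $\sum_{b\in\phi^{-1}(a)} \mathrm{e}^{i\theta(b)}/\abs{\phi'(b)}$; since the quantity is identically $0$, that sum must vanish.

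To make this rigorous I first pin down the local geometry of $\phi$ near the level $a$. Being a regular value, $a$ has finite preimage $\phi^{-1}(a)=\{b_1,\dots,b_m\}$. The critical set $\{t : \phi'(t)=0\}$ is closed, so its image is compact and, as $a$ is regular, lies at a strictly positive distance from $a$; together with the hypothesis $a\neq\phi(0),\phi(2\pi)$ this guarantees that for all sufficiently small $\epsilon$ the open set $\phi^{-1}((a-\epsilon,a+\epsilon))$ is a disjoint union of exactly $m$ open intervals $(c_j,d_j)$, each compactly contained in the interior of $I$, free of critical points, and hence mapped strictly monotonically onto $(a-\epsilon,a+\epsilon)$ with a single $b_j$ inside. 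I expect this to be the delicate step: one must shrink $\epsilon$ enough to expel from the relevant set both the critical points (whose values cluster away from $a$) and the endpoints $0,2\pi$, and it is precisely the exclusion $a\neq\phi(0),\phi(2\pi)$ that prevents a branch of the preimage from reaching the boundary and contributing only half its weight.

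Granting this structure, on each interval I substitute $s=\phi(t)$, folding the sign of $\phi'$ into an absolute value, to obtain
\[
\frac{1}{2\epsilon}\int_{c_j}^{d_j}\mathrm{e}^{i\theta(t)}\,\mathrm{d}t = \frac{1}{2\epsilon}\int_{a-\epsilon}^{a+\epsilon} \frac{\mathrm{e}^{i\theta(\psi_j(s))}}{\abs{\phi'(\psi_j(s))}}\,\mathrm{d}s,
\]
where $\psi_j$ denotes the local inverse of $\phi$ with $\psi_j(a)=b_j$. Because $\theta$ and $\phi'$ are continuous and $\phi'(b_j)\neq 0$, the integrand is continuous at $s=a$, so by the fundamental theorem of calculus each averaged integral tends to $\mathrm{e}^{i\theta(b_j)}/\abs{\phi'(b_j)}$ as $\epsilon\to 0$. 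Summing the $m$ branches and recalling that the full integral is $0$ for every $\epsilon$ yields $\sum_{b\in\phi^{-1}(a)} \mathrm{e}^{i\theta(b)}/\abs{\phi'(b)}=0$, as desired. (This limiting procedure is nothing but the one-dimensional coarea formula applied to the weight $\mathrm{e}^{i\theta}$, read off at the single regular level $a$.)
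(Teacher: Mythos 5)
Your proof is correct and follows essentially the same route as the paper's: both localize via Lemma~\ref{composition} applied to the characteristic function of a small window around $a$, decompose the preimage into finitely many monotone branches, and recover the sum $\sum_b \mathrm{e}^{i\theta(b)}/|\phi'(b)|$ by a limiting/differentiation argument (the paper uses the one-sided window $[a,a+\delta]$ and differentiates $R(\delta)$ at $\delta=0$, while you average over a symmetric window and let $\epsilon\to 0$). A minor bonus of your version is that, by not inserting the extra weight $\phi(t)$ in the integrand, you avoid the spurious factor of $a$ that appears in the paper's final identity $0=a\sum_{b}\mathrm{e}^{i\theta(b)}/|\phi'(b)|$.
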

\begin{proof}
In Lemma \ref{composition} we select $g=\chi_{[a,a+\delta]}$, that is the characteristic function of the interval $[a,a+\delta]$, and obtain 
$$
\int_{\phi^{-1}([a,a+\delta])} \mathrm{e}^{i \theta(t)} \phi(t) \mathrm{d}t =0,  \;\;\; \forall \; \delta\geq 0, a \in \mathbb{R}.
$$
We can find $\delta>0$ such that the restrictions $\{\phi_j\}$ of $\phi$ to the finitely many components of $\phi^{-1}([a,a+\delta])$ are invertible. Calling $R(\delta):=\int_{\phi^{-1}([a,a+\delta])} \mathrm{e}^{i \theta(t)} \phi(t) \mathrm{d}t$, we compute its derivative with respect to $\delta$
$$
R'(\delta) = \sum_j \mathrm{e}^{\theta(\phi_j^{-1}(a+\delta))} \phi(\phi_j^{-1}(a+\delta)) \cdot |(\phi^{-1})'(a+\delta)|,
$$
and then, since $R$ is a constant,
\begin{equation*}
0=R'(0)=a \sum_{b\in \phi^{-1}(a)} \frac{\mathrm{e}^{i\theta(b)}}{|\phi'(b)|}. \qedhere
\end{equation*}
\end{proof}
The reason we excluded the level sets $\phi(0)$ and $\phi(2\pi)$ from the constraint on the sum is to avoid to distinguish cases depending on the sign of the derivative at extreme points of the interval: all relevant information is already in the condition for the sum over inner points.

\begin{remark}
In order to have the equivalence (\ref{cond1}) $\Leftrightarrow$ (\ref{cond3}), in addition we must require $\int_{\phi^{-1}(a)}\mathrm{e}^{i\theta}=0$, for all $a\in \mathbb{R}$. If $\phi$ is analytic this requirement is always met.
\end{remark}

We collect in one theorem all the conditions we have proven equivalent.
\begin{theorem} \label{thmcond}
Let $k,f \in  C^0(I,\mathbb{R})$ and $\theta(t)=\int_0^t k(s)\mathrm{d}s, \phi(t) = \int_0^t f(s) \mathrm{d}s$. The following conditions are equivalent. 
\begin{enumerate}
\setcounter{enumi}{-1}
\item The curve with curvature $k+\lambda f$ is closed $\forall \; \lambda \in \mathbb{R}$,
\item $\int_0^{2\pi} \mathrm{e} ^{i \theta (t)}   \phi (t)^n \mathrm{d}t=0,  \;\;\; \forall \; n \in \mathbb{N}_0, $
\item $\int_0^{2\pi} \mathrm{e} ^{i \theta (t)} g( \phi (t) )^n \mathrm{d}t=0, \;\;\; \forall \; n\in \mathbb{N}_0 \text{ and any } g \text{ bounded and integrable}$.
\end{enumerate}
Moreover, they imply
$$
a \neq \phi(0),\phi(2\pi)  \text{ is a regular value of } \phi \Rightarrow \sum_{b\in \phi^{-1}(a)} \frac{\mathrm{e}^{i\theta(b)}}{|\phi'(b)|}=0.   
$$
\end{theorem}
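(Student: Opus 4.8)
The plan is to observe that Theorem \ref{thmcond} is essentially a bookkeeping statement: each of its assertions has already been isolated in one of the preceding lemmas, so the proof amounts to gluing them together and checking that the regularity hypotheses are mutually compatible. First I would record that, since $k,f\in C^0(I,\mathbb{R})$ and $\theta,\phi$ are defined as their antiderivatives $\theta(t)=\int_0^t k$, $\phi(t)=\int_0^t f$, both $\theta$ and $\phi$ automatically lie in $C^1(I,\mathbb{R})$. This is exactly the regularity demanded by Lemma \ref{composition} and by the lemma producing the sum condition \eqref{cond3}, so all the earlier results apply verbatim to the pair $(\theta,\phi)$ under consideration.

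For the equivalence (0) $\Leftrightarrow$ (1) I would simply invoke the opening lemma of the section, which already proves that closedness of the curves with curvature $k+\lambda f$ for every $\lambda$ is the same as the vanishing of all derivatives $F^{(n)}(0)$, i.e. condition \eqref{cond1}. For (1) $\Leftrightarrow$ (2), the implication (2) $\Rightarrow$ (1) is immediate: taking $g$ to be the identity, which is bounded on the compact range $\phi(I)$ and trivially integrable, turns $g(\phi)^n$ into $\phi^n$ and recovers \eqref{cond1}. The reverse implication (1) $\Rightarrow$ (2) is precisely the content of Lemma \ref{composition}, whose polynomial-approximation argument delivers $\int_0^{2\pi}\mathrm{e}^{i\theta}g(\phi)^n\,\mathrm{d}t=0$ for every bounded integrable $g$ and every $n\in\mathbb{N}_0$. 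Chaining these yields (0) $\Leftrightarrow$ (1) $\Leftrightarrow$ (2).

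Finally, the ``moreover'' clause is nothing but the one-directional implication \eqref{cond1} $\Rightarrow$ \eqref{cond3} established in the lemma immediately preceding the theorem, so I would restate it for a regular value $a\neq\phi(0),\phi(2\pi)$ without further work. I do not expect a genuine obstacle, since no new mathematics is required here; the only points deserving care are the hypothesis-matching just mentioned, namely confirming $\theta,\phi\in C^1$ and that the identity map qualifies as an admissible $g$, together with being explicit that the sum condition is claimed only as a consequence and not an equivalence (the converse failing in general, as the earlier remark about the supplementary requirement $\int_{\phi^{-1}(a)}\mathrm{e}^{i\theta}=0$ makes clear). Were one to prefer a self-contained theorem rather than a compilation, the single nontrivial ingredient to reproduce would be the density argument underlying Lemma \ref{composition}.
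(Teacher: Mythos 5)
Your proposal matches the paper exactly: the theorem is stated there as a pure compilation ("We collect in one theorem all the conditions we have proven equivalent"), with (0)$\Leftrightarrow$(1) from the opening lemma, (1)$\Leftrightarrow$(2) from Lemma \ref{composition}, and the one-directional sum condition from the lemma establishing \eqref{cond3}. Your hypothesis-matching remarks (that $\theta,\phi\in C^1$ automatically, and that the identity is an admissible $g$ on the compact range of $\phi$) are correct and, if anything, slightly more careful than the paper itself.
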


We also point out the following corollary, which rules out the possibility of vector spaces of curvatures of closed curves.
\begin{cor}
For $f \in  C^0(I,\mathbb{R})$, there exists $\lambda$ such that the curve that has $\lambda f$ as curvature is not closed. More precisely, the set  $\Lambda=\{ \lambda \in \mathbb{R} : \lambda f $ is the curvature of a closed curve$\}$ does not have accumulation points.
\end{cor}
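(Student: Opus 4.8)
The plan is to realize $\Lambda$ as the zero set of a single analytic function of $\lambda$ and then invoke the elementary dichotomy that such a zero set is either all of $\mathbb{R}$ or has no accumulation points. Since here we are looking at curvatures of the pure form $\lambda f$, the relevant base turning angle is the one produced by $k\equiv 0$, namely $\theta\equiv 0$, so that $\mathrm{e}^{i\theta(t)}\equiv 1$. With this choice, and exactly as in the discussion preceding the first lemma, the curve with curvature $\lambda f$ is closed if and only if
\[
G(\lambda):=\int_0^{2\pi}\mathrm{e}^{i\lambda\phi(t)}\,\mathrm{d}t=0, \qquad \phi(t)=\int_0^t f(s)\,\mathrm{d}s,
\]
so that $\Lambda=G^{-1}(0)$. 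Thus the whole statement reduces to understanding the zero set of $G$.

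Next I would observe that $G$ is precisely the specialization to $\theta\equiv 0$ of the function $F$ introduced at the opening of Section \ref{first}, and is therefore analytic (indeed entire) in $\lambda$, with Maclaurin coefficients proportional to the moments $\int_0^{2\pi}\phi(t)^n\,\mathrm{d}t$. The crucial point is that $G$ cannot vanish identically: evaluating at $\lambda=0$ gives $G(0)=\int_0^{2\pi}1\,\mathrm{d}t=2\pi\neq 0$. Equivalently, the $n=0$ instance of condition (\ref{cond1}) would read $2\pi=0$, which is absurd, so condition (0) of Theorem \ref{thmcond} already fails for the pair $(0,f)$. Being analytic and not identically zero, $G$ has isolated zeros, so by the identity theorem $\Lambda$ has no accumulation point in $\mathbb{R}$; this is the ``more precisely'' assertion. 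In particular $\Lambda\neq\mathbb{R}$, since every point of $\mathbb{R}$ is an accumulation point of $\mathbb{R}$, and hence there exists $\lambda$ for which $\lambda f$ is not the curvature of a closed curve, which is the first assertion.

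I do not anticipate a genuine obstacle: once closedness is rephrased through $G$, the argument is just the standard fact that the zeros of a nonzero real-analytic function are discrete, combined with the one-line computation $G(0)=2\pi$. The only step needing a little care is justifying that $G$ is analytic with the claimed power series, but this is exactly the computation already performed for $F$ at the start of Section \ref{first}, so it can be quoted rather than repeated.
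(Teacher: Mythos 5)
Your proof is correct and follows essentially the same route as the paper: both realize $\Lambda$ as the zero set of the entire function $\lambda\mapsto\int_0^{2\pi}\mathrm{e}^{i\lambda\phi(t)}\,\mathrm{d}t$, check that it is not identically zero, and conclude via the identity theorem. Your non-vanishing step ($G(0)=2\pi$, i.e.\ the $n=0$ moment condition fails) is in fact a slightly cleaner observation than the paper's appeal to the even-moment conditions, but the argument is the same in substance.
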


\begin{proof}
Setting $k\equiv 0$, condition (1) of Theorem \ref{thmcond} becomes $\int_0^{2\pi} \phi(t)^n dt=0$ for all $n\in\mathbb{N}_0$, which for $n$ even can only be satisfied by $\phi \equiv 0$. On the other hand, the presence of accumulation points in $\Lambda$ is enough to guarantee $\Lambda=\mathbb{R}$ by the analiticity argument from the beginning of the section, hence entailing the same conclusion.
\end{proof}

\section{Conditions on the boundary.} \label{per} In this section we discuss some periodicity properties that $\theta$ and $\phi$ must satisfy if the curve with turning angle function $\theta+\lambda \phi$ is closed for all $\lambda\in \mathbb{R}$. We will show that, under the conditions of Theorem \ref{thmcond}, the respective behaviour of $\theta$ and $\phi$ on the boundary is strongly related.

\begin{prop} \label{bound1}
If $\theta,\phi \in C^h(I,\mathbb{R})$ satisfy condition (\ref{cond3}) and $\phi(0)$ is not a critical value, then $\phi(0)=\phi(2\pi)$ and the derivatives of $\phi,\theta$ obey either
\begin{align*}
\theta(2\pi)-\theta(0)\equiv0 \mod 2\pi, \;\; &\theta^{(k)}(0)= \theta^{(k)}(2\pi), \; &\;\;\;\;\;\;\;\;\;1\leq k \leq h-1, \\
&\phi^{(k)}(0)= \phi^{(k)}(2\pi), &\;\;\;\;\;\;\;\;\; 1\leq k \leq h
\end{align*}
or
\begin{align*}
\theta(2\pi)-\theta(0)\equiv\pi \mod 2\pi, \;\; &\theta^{(k)}(0)= (-1)^k \theta^{(k)}(2\pi), \; &1\leq k \leq h-1, \\
&\phi^{(k)}(0)= (-1)^k \phi^{(k)}(2\pi), \; &1  \leq k \leq h.
\end{align*}
\end{prop}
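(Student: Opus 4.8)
The plan is to analyse how the sum in (\ref{cond3}) depends on the level $a$ by isolating the contribution of each individual branch of $\phi^{-1}$. Near a point $t_0$ where $\phi'(t_0)\neq 0$, writing $c_0=\phi(t_0)$ and using $1/\phi'=(\phi^{-1})'$, the corresponding summand is the branch function $B_{\alpha,\beta}(a):=e^{i\alpha(\beta^{-1}(a))}\,(\beta^{-1})'(a)$ with $\alpha=\theta,\ \beta=\phi$. The technical heart of the proof, and the step I expect to be the main obstacle, is a rigidity lemma for these functions: if $\alpha,\beta\in C^h$ with $\beta'(t_0)>0$, then the $(h-1)$-jet of $B_{\alpha,\beta}$ at $c_0$ determines, and is determined by, the data $\alpha(t_0),\dots,\alpha^{(h-1)}(t_0)$ and $\beta'(t_0),\dots,\beta^{(h)}(t_0)$. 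A Faà di Bruno computation gives that the coefficient of $(a-c_0)^j$ equals $\frac{e^{i\alpha(t_0)}}{j!\,\beta'(t_0)^{j+1}}\bigl(i\,\alpha^{(j)}(t_0)-\beta^{(j+1)}(t_0)/\beta'(t_0)\bigr)$ plus a remainder involving only lower-order derivatives of $\alpha,\beta$; since within the bracket the imaginary and real parts isolate $\alpha^{(j)}(t_0)$ and $\beta^{(j+1)}(t_0)$ respectively, this linear system is triangular with non-vanishing diagonal, which yields the bijection. Verifying the clean decoupling, and that the remainder is genuinely of lower order, is the delicate part.

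With the lemma available, I would fix a regular value $c$ of $\phi$ (recall such level sets are finite) and observe that, for $a$ close to $c$, the preimages of $a$ cluster near those of $c$, so that $S(a):=\sum_{b\in\phi^{-1}(a)}e^{i\theta(b)}/|\phi'(b)|$ splits as $S_{\mathrm{bulk}}(a)+(\text{the endpoint branches present at }a)$. Here $S_{\mathrm{bulk}}$, the sum over the finitely many interior branches, is $C^{h-1}$ on a full two-sided neighbourhood of $c$, whereas the branch emanating from $t=0$ (resp.\ $t=2\pi$) occurs only on the side of $c$ fixed by the sign of $\phi'(0)$ (resp.\ $\phi'(2\pi)$). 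Condition (\ref{cond3}) forces $S\equiv0$ at regular values, which are dense by Sard; combined with the continuity of the smooth pieces this pins down the relevant one-sided jets of the branch functions in terms of $-S_{\mathrm{bulk}}$.

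I would first prove $\phi(0)=\phi(2\pi)$. If $\phi(2\pi)\neq\phi(0)=:c_0$, then near $c_0$ the endpoint $2\pi$ contributes nothing: on the side of $c_0$ where the $t=0$ branch is absent one gets $S_{\mathrm{bulk}}\equiv0$, hence its full $(h-1)$-jet vanishes, and on the other side $B_{\theta,\phi}=-S_{\mathrm{bulk}}$ would then have vanishing jet, contradicting that its zeroth coefficient $e^{i\theta(0)}/|\phi'(0)|$ is non-zero (this is where $\phi(0)$ being a regular value is used). Hence $\phi(0)=\phi(2\pi)=:c$ and both endpoint branches now sit at the same level. The dichotomy of the statement is exactly the sign of $\phi'(0)\phi'(2\pi)$: when the two derivatives have the same sign the branches lie on opposite sides of $c$, and smoothness of $S_{\mathrm{bulk}}$ across $c$ equates the one-sided jet of the $0$-branch with that of the $2\pi$-branch; when they have opposite signs the branches lie on the same side, the opposite side forces $S_{\mathrm{bulk}}$ to vanish there, and one obtains instead that the jet of the sum of the two endpoint branches vanishes.

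In either case I would reparametrise the branch at $2\pi$ by the reflection $w=2\pi-t$, under which $w\mapsto\theta(2\pi-w)$ and $w\mapsto\phi(2\pi-w)$ are again $C^h$ with $k$-th derivatives $(-1)^k\theta^{(k)}(2\pi)$ and $(-1)^k\phi^{(k)}(2\pi)$ at $w=0$, turning the $2\pi$-branch into an ordinary branch function to which the rigidity lemma applies. Matching the two jets and reading off the data order by order then gives the conclusion: the zeroth order yields $|\phi'(0)|=|\phi'(2\pi)|$ together with $\theta(2\pi)-\theta(0)\equiv0$ or $\pi\pmod{2\pi}$, and the order-$j$ identity delivers the relations between $\theta^{(j)},\phi^{(j+1)}$ at $0$ and at $2\pi$. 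When the derivatives have the same sign an extra orientation reversal $u\mapsto-u$ cancels the reflection signs, producing the first alternative (all signs $+$ and $\theta(2\pi)\equiv\theta(0)$); when they have opposite signs only the reflection signs survive and the factor $e^{i\pi}$ coming from the vanishing sum produces the second alternative (alternating signs $(-1)^k$ and $\theta(2\pi)\equiv\theta(0)+\pi$). Letting $j$ run up to $h-1$ exhausts the available regularity and yields exactly the two cases of the proposition; the points demanding care are the one-sided nature of the jets, the bookkeeping of signs under the reflection, and the density argument passing from vanishing at regular values to identities of smooth jets.
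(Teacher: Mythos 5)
Your proposal is correct and follows essentially the same route as the paper: you decompose the level-set sum near $a=\phi(0)$ into two-sided interior branch contributions and one-sided endpoint contributions, deduce $\phi(0)=\phi(2\pi)$ from the unbalanceable endpoint term, split on the sign of $\phi'(0)\phi'(2\pi)$, and match derivatives order by order using the orthogonality of $\mathrm{e}^{i\theta(0)}$ and $i\mathrm{e}^{i\theta(0)}$. The paper phrases the order-by-order step as repeated differentiation in $\varepsilon$ plus induction rather than as a jet-rigidity lemma via Fa\`a di Bruno, but this is a presentational difference, not a substantive one.
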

\begin{proof}
Setting $a=\phi(0)$, we consider $\delta>0$ such that $\phi$ is invertible on the finitely many components of $\phi^{-1}([a-\delta,a+\delta])$. We then look at the connected components of $\phi^{-1}([a,a+\delta])$ and we use the symbol $p_j$ for the restriction of $\phi$ to the $j$-th component, numbered from the left ($j=1,...,N_+$, see Figure \ref{phi}). Similarly, functions $m_j$'s are the restriction of $\phi$ to the connected components of $\phi([a-\delta,a])$. Rewriting condition (\ref{cond3}) we have
$$
\lim_{\varepsilon \to 0^+} \sum_j \frac{\mathrm{e}^{i\theta\big(p_j^{-1}(a+\varepsilon)\big )}}{|\phi'(p_j^{-1}(a+\varepsilon))|} = \lim_{\varepsilon \to 0^-} \sum_j \frac{\mathrm{e}^{i\theta\big(m_j^{-1}(a+\varepsilon)\big )}}{|\phi'(m_j^{-1}(a+\varepsilon))|}=0.
$$

\begin{figure}
\begin{tikzpicture}
    \node[anchor=south west,inner sep=0] at (0,0) {\scalebox{0.85}{\includegraphics[width=\textwidth]{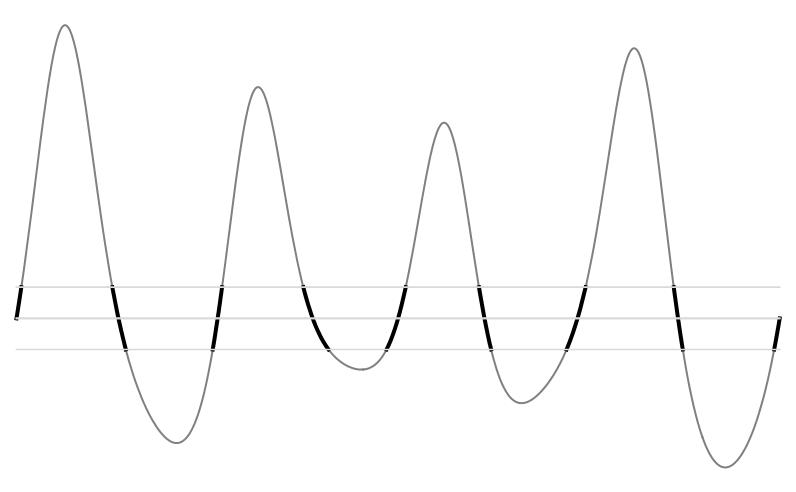}}};
     \draw[->] (0.2,0.3) -- (0.2,7);
       \draw[->] (-0.3,0.5) -- (11,0.5);
       \draw[] (10.56,0.3) -- (10.56,0.7);
           \node[] at (9,6) {$\phi$};
       \node[] at (10.56,0.1) {$2\pi$};
       \node[] at (0.2,0.1) {$0$};
    \node[] at (0,2.35) {$a$};
    \node[] at (-0.3,2.8) {$a+\delta$};
    \node[] at (-0.3,1.95) {$a-\delta$};
    \node[] at (0.55,2.55) {$p_1$};
    \node[] at (1.9,2.55) {$p_2$};
    \node[] at (3.26,2.55) {$p_3$};
    \node[] at (4.5,2.55) {$p_4$};
    \node[] at (9.56,2.55) {$p_{N_+}$};
    \node[] at (1.35,2.1) {$m_1$};
    \node[] at (2.6,2.1) {$m_2$};
    \node[] at (4.7,2.1) {$m_3$};
    \node[] at (11,2.1) {$m_{N_-}$};
    \draw[dashed] (4.23,0.48) -- (4.23,2.4);
    \draw[dashed] (4.1,0.48) -- (4.1,2.76);
    \draw[dashed] (4.45,0.48) -- (4.45,1.92);
\end{tikzpicture}
\caption{The functions $\{p_j\}_{j \in \{1,...,N_+\}}$ are the invertible restrictions of $\phi$ to the finitely many components of $\phi^{-1}([a,a+\delta])$ and analogously $\{m_j\}_{j \in \{1,...,N_-\}}$ are the restrictions of $\phi$ to $\phi^{-1}([a-\delta,a])$. } \label{phi}
\end{figure}

Since limit contributions coming from the restrictions to intervals in the interior of $I$ are equal in the two sums, we have no other choice than $\phi(0)=\phi(2\pi)$, otherwise the contribution from $p_1$ in the first sum could not be balanced in the limit by any terms of the second sum. Without loss of generality we can assume $\phi'(0)>0$. We distinguish two cases, depending on the sign of $\phi'(2\pi)$. If $\phi'(2\pi)>0$, just by rewriting again condition (\ref{cond3}) while keeping contributions from the two extreme intervals on the left-hand side of the equalities, we have , for $0< \varepsilon <\delta$
\begin{align*}
\frac{\mathrm{e}^{i\theta\big(p_1^{-1}(a+\varepsilon)\big )}}{\phi'(p_1^{-1}(a+\varepsilon))}&=- \;  \sum_{1<j} \frac{\mathrm{e}^{i\theta\big(p_j^{-1}(a+\varepsilon)\big )}}{|\phi'(p_j^{-1}(a+\varepsilon))|}, \\  
\frac{\mathrm{e}^{i\theta\big(m_{N_-}^{-1}(a-\varepsilon)\big )}}{\phi'(m_{N_-}^{-1}(a-\varepsilon))}&=-\sum_{j<N_-}
\frac{\mathrm{e}^{i\theta\big(m_j^{-1}(a-\varepsilon)\big )}}{|\phi'(m_j^{-1}(a-\varepsilon))|}.
\end{align*}
The sums on the right-hand side of the equations are equal for $\varepsilon = 0$, entailing 
$$
\frac{\mathrm{e}^{i\theta(0)}}{\phi'(0)}=\frac{\mathrm{e}^{i\theta(2\pi)}}{\phi'(2\pi)},
$$
which proves $\theta(2\pi)-\theta(0)\equiv 0 \mod 2\pi$ and $\phi'(0)=\phi'(2\pi)$. Analogously, taking the first derivative of the equations with respect to $\varepsilon$ and considering the limit $\varepsilon \to 0$, we conclude
$$
\frac{i\mathrm{e}^{i\theta(0)}\theta'(0)-\mathrm{e}^{i\theta(0)}\phi''(0)\frac{1}{\phi'(0)}}{\phi(0)^2}=\frac{i\mathrm{e}^{i\theta(2\pi)}\theta'(2\pi)-\mathrm{e}^{i\theta(2\pi)}\phi''(2\pi)\frac{1}{\phi'(2\pi)}}{\phi(2\pi)^2},
$$
which, already knowing the respective relations of $\theta, \phi$ and $\phi'$ at extreme parameters, and noticing that $\mathrm{e}^{i\theta(0)}$ and $i \mathrm{e}^{i\theta(0)}$ are orthogonal, implies $\theta'(0)=\theta'(2\pi)$ and $\phi''(0)=\phi''(2\pi)$. For the derivatives of higher order, the statement follows analogously by  induction.
 
If $\phi'(2\pi)<0$, we get
\begin{align*}
\frac{\mathrm{e}^{i\theta\big(p_1^{-1}(a+\varepsilon)\big )}}{\phi'(p_1^{-1}(a+\varepsilon))}
-\frac{\mathrm{e}^{i\theta\big(p_{N_+}^{-1}(a+\varepsilon)\big )}}{\phi'(p_{N_+}^{-1}(a+\varepsilon))}&=-\sum_{1<j<N_+} \frac{\mathrm{e}^{i\theta\big(p_j^{-1}(a+\varepsilon)\big )}}{|\phi'(p_j^{-1}(a+\varepsilon))|}, \\
0 &= - \;\;\; \sum_{j} \;\;\,
\frac{\mathrm{e}^{i\theta\big(m_j^{-1}(a-\varepsilon)\big )}}{|\phi'(m_j^{-1}(a-\varepsilon))|} 
\end{align*}
and we conclude again by taking derivatives term by term with respect to $\varepsilon$ and using induction.
\end{proof}

\begin{remark} 
Note that the constraint on the curves associated to $k+\lambda f$ to be closed for all $\lambda$ just means that starting and end point coincide. Proposition \ref{bound1} proves that in this case the function $k$ and $f$ enjoy much stronger periodicity.
\end{remark}

\begin{remark} 
In the hypotheses of Proposition \ref{bound1}, we obtain an additional constraint on the integral over $I$ of the function $f$, in fact $\int_0^{2\pi} f(s) \mathrm{d}s = \phi(2\pi) =\phi(0)=0$. This means that along the affine line $k+\lambda f$ the total turning angle of the associated curve is constant and equal to $0$ or $\pi$ up to multiples of $2\pi$.
\end{remark}

\section{Explicit constructions of families of closed curves.} \label{secexistence} In \S \ref{first} and \S \ref{per} we characterized pairs of functions $(k,f)$ such that the curve obtained by integrating the curvature $k+\lambda f$ is closed for all $\lambda \in \mathbb{R}$. In this section we are interested in the existence of such pairs. We show how one can explicitly construct curvature functions with the desired properties.  

\begin{lemma} \label{existence}
If $\theta \in C^1(I,\mathbb{R})$, then 
\begin{align*}
\exists \phi \in C^1(I,\mathbb{R}): \int_0^{2\pi} & \mathrm{e}^{i\theta(s)} \phi(s)^n \mathrm{d} s=0, \; \forall n\in \mathbb{N} \\
&\Leftrightarrow \\ 
\exists \psi \in C^1(I,\mathbb{R}): \int_0^{2\pi} & \mathrm{e}^{i\theta(s)} \mathrm{e}^{i n \psi(s)} \mathrm{d}s=0, \; \forall n\in \mathbb{N}. \\
\end{align*}
\end{lemma}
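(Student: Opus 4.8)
The plan is to read both conditions as statements about the complex measure $\mathrm{d}\mu=\mathrm{e}^{i\theta(s)}\,\mathrm{d}s$ on $I$: the left-hand condition says that every moment of the pushforward $\phi_{*}\mu$ vanishes, while the right-hand one says that the Fourier coefficients of non-negative index of the pushforward $(\mathrm{e}^{i\psi})_{*}\mu$ to the unit circle vanish. The engine in both directions is Lemma \ref{composition}, which promotes the vanishing of the polynomial moments to $\int_{0}^{2\pi}\mathrm{e}^{i\theta}g(\phi)\,\mathrm{d}s=0$ for every bounded integrable $g$. Throughout I use the instance $\int_{0}^{2\pi}\mathrm{e}^{i\theta}\,\mathrm{d}s=0$ (closedness of the base curve), without which both families degenerate to the trivial $\phi\equiv 0$.

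For the implication from left to right I would simply set $\psi:=\phi$. Applying Lemma \ref{composition} with the bounded integrable function $g(x)=\mathrm{e}^{inx}$ gives at once
\[
\int_{0}^{2\pi}\mathrm{e}^{i\theta(s)}\,\mathrm{e}^{in\psi(s)}\,\mathrm{d}s=\int_{0}^{2\pi}\mathrm{e}^{i\theta(s)}\,g(\phi(s))\,\mathrm{d}s=0,\qquad\forall\,n\in\mathbb{N}.
\]
Equivalently, and transparently, one expands $\mathrm{e}^{in\phi}=\sum_{k\geq 0}\tfrac{(in)^{k}}{k!}\phi^{k}$ and integrates term by term against $\mathrm{e}^{i\theta}$: each summand with $k\geq 1$ vanishes by hypothesis, while the $k=0$ term vanishes by base closedness. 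So the very same $\phi$, viewed now as a phase, does the job.

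The substantial direction is from right to left, and this is where I expect the main obstacle. The difficulty is structural: the hypothesis on $\psi$ controls only the \emph{positive} harmonics $\int\mathrm{e}^{i\theta}\mathrm{e}^{in\psi}$, whereas each power $\phi^{n}$ of a \emph{real} function unavoidably carries both positive and negative frequencies, so no pointwise substitution $\phi=g(\psi)$ can succeed (a real $g(\psi)$ with one-sided spectrum is constant). My plan is therefore not to transport $\psi$ to $\phi$ pointwise, but to use the existence of a good $\psi$ as structural information about $\theta$ and then build $\phi$ by hand. Concretely, I would first promote the hypothesis, via Fej\'er/Taylor approximation in the disk algebra, to $\int_{0}^{2\pi}\mathrm{e}^{i\theta}G(\mathrm{e}^{i\psi})\,\mathrm{d}s=0$ for every $G$ holomorphic on the disk with $G(0)=0$; I would then exploit the boundary periodicity forced by Proposition \ref{bound1}, pairing preimages of the level sets of a \emph{symmetric} candidate $\phi$ so that the contributions $\mathrm{e}^{i\theta(b)}$ appearing in condition (\ref{cond1}) cancel two-by-two. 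The guiding model is the unit circle, where $\theta(s)=s$ satisfies the right-hand condition with $\psi(s)=s$, and where $\phi(s)=\cos 2s$ already satisfies the left-hand condition precisely because $\theta(s+\pi)=\theta(s)+\pi$ makes paired contributions cancel. The crux to be handled carefully is realizing such a $\phi\in C^{1}(I,\mathbb{R})$ for an arbitrary admissible $\theta$, that is, converting the one-sided spectral gap encoded by $\psi$ into the two-sided level-set symmetry that a genuine moment condition demands.
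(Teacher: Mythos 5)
Your forward direction is exactly the paper's: set $\psi=\phi$ and apply Lemma \ref{composition} (equivalently, condition (2) of Theorem \ref{thmcond}) with the bounded integrable function $g(x)=\mathrm{e}^{inx}$. The problem is the converse, which in your write-up is not a proof but a programme: you announce a strategy (Fej\'er/Taylor approximation in the disk algebra, the boundary periodicity of Proposition \ref{bound1}, a two-by-two cancellation over level sets of a symmetric candidate $\phi$) and then explicitly leave ``the crux'' --- actually producing such a $\phi$ for an arbitrary admissible $\theta$ --- unresolved. As it stands, the right-to-left implication is missing. The paper's own converse is a one-line pointwise substitution of precisely the kind you rule out: take $\phi=\cos(\psi)$ and expand $\cos(\psi)^n$ as a finite linear combination of the terms $\cos(h\psi)=\tfrac{1}{2}\bigl(\mathrm{e}^{ih\psi}+\mathrm{e}^{-ih\psi}\bigr)$ with $0\le h\le n$.

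Your objection to that substitution --- that the hypothesis as literally written controls only $\int\mathrm{e}^{i\theta}\mathrm{e}^{in\psi}$ for $n\ge 1$, whereas $\cos(h\psi)$ also requires the conjugate harmonics and the $h=0$ term requires $\int\mathrm{e}^{i\theta}=0$ --- is a fair reading and does expose an imprecision in the statement. But the cure is not to abandon the substitution for a much harder constructive scheme; it is to observe that where the lemma is used (\S\ref{secexistence}) the orthogonality comes from Fourier gaps and therefore holds for $\cos(nMt)$ and $\sin(nMt)$ separately, i.e. for $\mathrm{e}^{\pm in\psi}$, while the $n=0$ case is the closedness of the base curve, which you yourself assume throughout. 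Under that intended reading, $\phi=\cos(\psi)$ settles the converse immediately. By contrast, your proposed route gives no indication of why a one-sided spectral condition alone would force the two-sided level-set symmetry you need, nor how Proposition \ref{bound1} (a statement about behaviour at the endpoints) would produce cancellation at interior level sets. Either complete the substitution argument under the strengthened hypothesis or supply the missing construction; at present the equivalence is only half proved.
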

\begin{proof}
The first existence statement implies the second just by taking $\psi=\phi$ and recalling condition (2) of Theorem \ref{thmcond}, guaranteeing that the composition with a function that is bounded and integrable mantains the desired property. The other way round we pick for example $\phi=\cos(\psi)$ and conclude by observing that $\cos(\psi)^n$ can be rewritten as a linear combination of terms of the form $\cos(h\cdot\psi)$.
\end{proof}

We now consider curves allowing a periodic regular parametrization that can be expressed as a Fourier series with periodic gaps in the coefficients 
$$
\gamma(t)=\Bigg(\sum_{j=0}^\infty a_j \cos(j\cdot t)+b_j \sin(j\cdot t),\sum_{j=0}^\infty \bar{a}_j \cos(j\cdot t)+\bar{b}_j \sin(j\cdot t)\Bigg),
$$
that is $a_j=b_j=\bar{a}_j=\bar{b}_j=0$ whenever $j$ is an integer multiple of $M\in\mathbb{N}$.
The asymptotics of the coefficients for $j$ going to infinity determines periodicity and differentiability of the function (see for example \cite{harmonic}). From now on we assume that $\gamma$ is a closed analytic curve, which, in the most trivial case, can simply be obtained by truncating the series and considering a trigonometric polynomial; in this case all the harmonics with index larger than the degree of the polynomial are $0$ and therefore there exists always $M$ satisfying the conditions above. By the orthogonality relations between elements of a Fourier basis we have
$$
\int_0^{2\pi} \gamma'(t) \cos(n\cdot M\cdot t) dt =(0,0), \; \forall n \in \mathbb{N}.
$$
Using complex notation, we write $\gamma'(t)$ as $ v(t)\mathrm{e}^{i\theta(t)}$ where $v(t)=\|\gamma'(t)\|$ is the speed of $\gamma$ and $\theta$ is the turning angle associated to the parametrization. After reparametrizing with respect to the arc-length (always possible as long as the curve is regular) we obtain, possibly scaling our curve to a length of $2\pi$, 
$$
\int_0^{2\pi} \mathrm{e}^{i\theta(t(s))} \cos(n\cdot M\cdot t(s)) ds =0, \; \forall n \in \mathbb{N}.
$$
Note that if $\gamma$ is analytic than also $\gamma'$, $\|\gamma'\|$, $\int_0^t \|\gamma'\|$ and its inverse are analytic and therefore arc-length parametrization preserves analyticity.
By Lemma \ref{existence}, $\phi(s)=\cos(l\cdot t(s))$ and $\theta(t(s))$ satisfy condition (1) of Theorem \ref{thmcond} and therefore the analytic curve obtained by integrating $\mathrm{e}^{i(\theta+\lambda \phi)}$ is closed for all real $\lambda$'s (note that the functions $\phi$ and $\theta$ constructed this way are in general not periodic of any period smaller than $2\pi$). It is enough to take the derivative with respect to $s$ to get the correspondent curvature functions. Figure \ref{imgintro} and Figure \ref{family} show families of curves obtained by such a linear modification of the turning angle (or equivalently of the curvature).

\begin{figure}
\begin{tikzpicture}
    \node[anchor=south west,inner sep=0] at (0,0) {\scalebox{1}{\includegraphics[width=\textwidth]{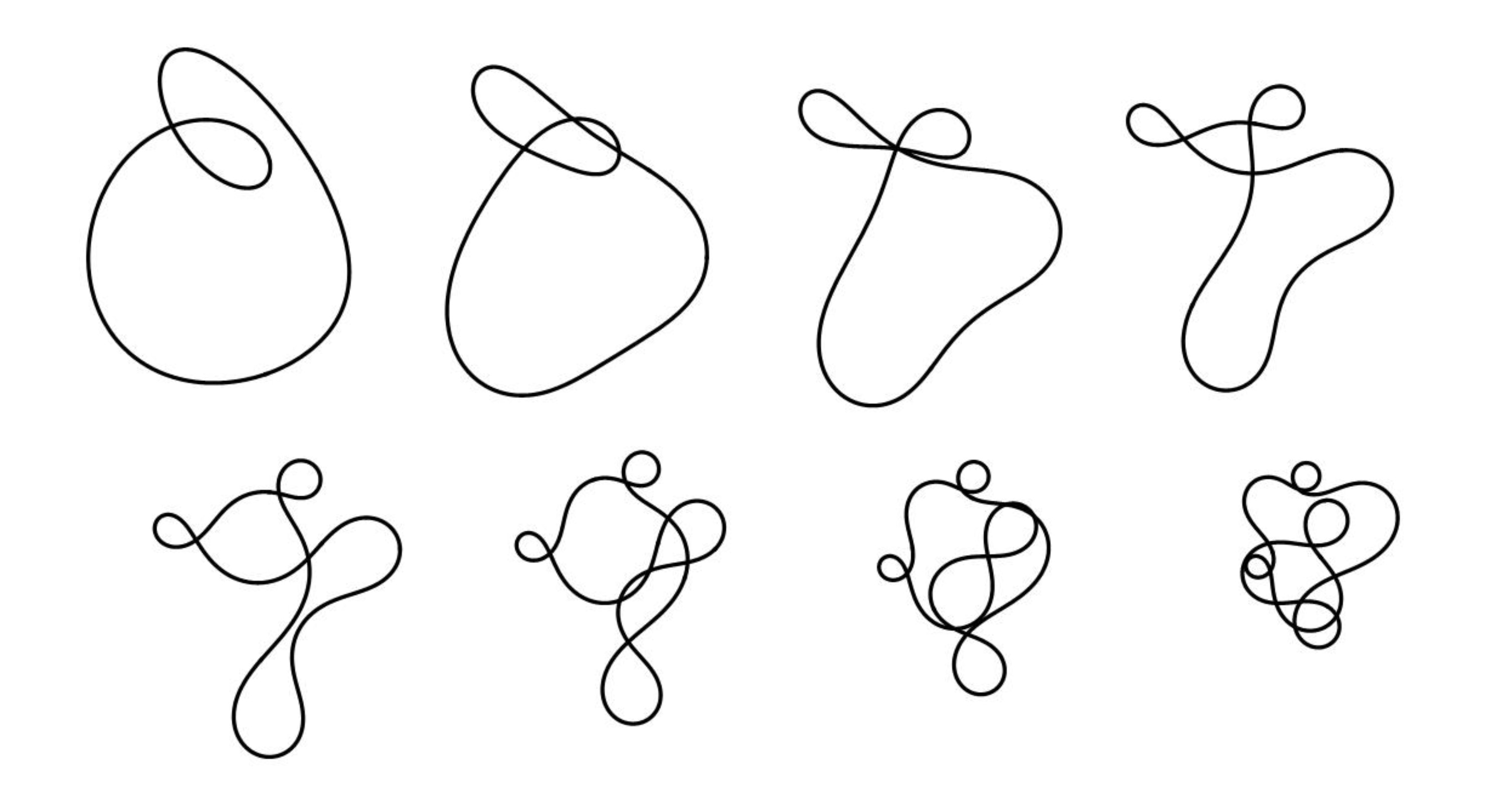}}};
\end{tikzpicture}
\caption{The turning angle $\theta$ of a trigonometric curve of degree $3$ is linearly changed to $\theta+\lambda\phi$ with $\phi(t)=\mathrm{e}^{\cos(4t)}+2\cos(4t)$ while the curve remains closed. From top left to bottom right $\lambda$ goes from $0$ to $0.7$ by $0.1$ increments.} \label{family}
\end{figure}

\section{The discrete case.} \label{discrete} In this section we look at a discretization of the problem we studied in the smooth setting. Consider an arc-length parametrized polyline, that is a finite sequence of \textit{vertices} $(v_j)_{j \in \{1,2,...,N\}} \subset \mathbb{C}$ with $\|v_{j+1} - v_{j}\|=1$ for $1\leq j \leq N-1$. We define the \textit{curvature} $k_j$ at a non-extreme vertex $v_j$ as the counter-clockwise angle between $v_{j} - v_{j-1}$ and $v_{j+1} - v_{j}$. The \textit{turning angle} $\theta_j$ at an interior vertex $v_j$ is the sum $\sum_{r=2}^j k_j$. Also in this setting we can reconstruct, up to rigid motions, a polyline from its curvature, first computing the turning angle $(\theta_j)$ and then defining 
$$
v_1=0, \;\; v_2=1, \;\; v_j=v_{j-1}+\mathrm{e}^{i\theta_{j-1}} \text{ for } j\geq 3.
$$ 

We consider now a polyline with $N$ vertices, which is \textit{closed} ($v_1=v_N$) and whose curvature is $(k_j)$. Given another discrete function $(f_j)_{j \in \{2,...,N-1\}} \in \mathbb{R}^{N-2}$, we ask what are the conditions on $(f_j)$ to guarantee that the polyline with curvature $(k_j)+\lambda (f_j)$ is closed for all $\lambda \in \mathbb{R}$. The following theorem answers this question, drawing a strong analogy to Theorem \ref{thmcond}.

\begin{theorem} \label{discretetheorem}
Let $(k_j)$ and $(f_j)$ be two discrete functions and $(\theta_j),(\phi_j)$ the turning angles obtained as their respective partial sums. The following conditions are equivalent
\begin{enumerate}
\setcounter{enumi}{-1}
\item The polyline with curvature $(k_j)+\lambda(f_j)$ is closed $\forall \lambda \in \mathbb{R}$,
\item $\sum_{1 < j < N} \mathrm{e}^{i\theta_j} \phi_j^n = 0, \;\; \forall n \in \mathbb{N}_0$, 
\item $\sum_{j \in \phi^{-1}(a)} \mathrm{e}^{i\theta_j}=0, \;\; \forall a \in \mathbb{R}$.
\end{enumerate}
\end{theorem}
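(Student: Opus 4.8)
The plan is to mirror the development of \S\ref{first}, exploiting the fact that every integral there is now a finite sum, so that the analytic and measure-theoretic subtleties collapse to elementary linear algebra. I will prove $(0)\Leftrightarrow(1)$ by a discrete analyticity argument and $(1)\Leftrightarrow(2)$ by a Vandermonde argument.

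First I would establish $(0)\Leftrightarrow(1)$. Reconstructing the polyline from the deformed curvature $(k_j)+\lambda(f_j)$, the turning angle at the $j$-th vertex becomes $\theta_j+\lambda\phi_j$, so closedness of the deformed polyline is exactly the vanishing of the sum of its edge vectors,
\[
G(\lambda):=\sum_{1\le j \le N-1} \mathrm{e}^{i(\theta_j+\lambda\phi_j)}=0,
\]
where $\theta_1=\phi_1=0$, so the initial edge has fixed direction $1$ independent of $\lambda$. Being a finite sum of exponentials, $G$ is entire in $\lambda$, hence $G\equiv 0$ if and only if $G^{(n)}(0)=0$ for every $n\in\mathbb{N}_0$. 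Differentiating term by term — no Leibniz rule or dominated-convergence argument is needed here, in contrast with the smooth case — gives $G^{(n)}(0)=i^n\sum_j \phi_j^{\,n}\,\mathrm{e}^{i\theta_j}$, which is precisely condition (1). This is the discrete counterpart of the opening analyticity argument of \S\ref{first} and of Theorem \ref{thmcond}.

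For $(1)\Leftrightarrow(2)$ I would group the terms of condition (1) by the value of $\phi$. Let $a_1<\dots<a_m$ be the distinct values attained by the finite tuple $(\phi_j)$, and set $S_l:=\sum_{j:\,\phi_j=a_l}\mathrm{e}^{i\theta_j}$, which is the left-hand side of (2) at level $a=a_l$ (for any $a$ outside this finite set the level set is empty and (2) holds trivially). Condition (1) then reads $\sum_{l=1}^m a_l^{\,n}\,S_l=0$ for every $n\in\mathbb{N}_0$. Restricting to $n=0,1,\dots,m-1$ produces a linear system whose coefficient matrix $(a_l^{\,n})$ is a Vandermonde matrix with distinct nodes $a_1,\dots,a_m$, hence invertible, so $S_l=0$ for every $l$, which is condition (2). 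The converse is immediate, since if every $S_l$ vanishes then $\sum_l a_l^{\,n}S_l=0$ for all $n$.

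I expect no genuine obstacle in the discrete setting: the role played in the smooth case by the density of polynomials and by differentiating $\int_{\phi^{-1}([a,a+\delta])}\mathrm{e}^{i\theta}\phi$ is taken over entirely by the invertibility of a Vandermonde matrix. The only point demanding care is bookkeeping at the boundary. Because $\phi_1=0$, the fixed initial edge contributes nothing to the $n\ge 1$ instances of (1) but a constant $\mathrm{e}^{i\theta_1}=1$ to the $n=0$ instance; one must retain it so that the $n=0$ equation correctly encodes the closedness of the undeformed polyline rather than being silently discarded, and so that the level $a=0$ in (2) accounts for this term.
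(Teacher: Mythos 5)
Your proof is correct. The $(0)\Leftrightarrow(1)$ step is essentially identical to the paper's: both observe that the edge-vector sum is entire in $\lambda$ and differentiate term by term at $\lambda=0$ (the paper phrases it as differentiating the constant identity $1=-\sum_{1<j<N}\mathrm{e}^{i(\theta_j+\lambda\phi_j)}$). Where you genuinely diverge is in $(1)\Rightarrow(2)$. The paper does not use a Vandermonde matrix; it iteratively extracts the dominant levels by taking $n\to\infty$ in $\sum_j\mathrm{e}^{i\theta_j}(\phi_j/A)^n$ with $A=\max_j|\phi_j|$, separating even and odd powers to kill $\sum_{\phi^{-1}(A)}\mathrm{e}^{i\theta_j}$ and $\sum_{\phi^{-1}(-A)}\mathrm{e}^{i\theta_j}$, then repeating with the next-largest modulus until all vertices are exhausted. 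Your argument — grouping by the $m$ distinct values of $\phi$ and inverting the $m\times m$ Vandermonde system built from $n=0,\dots,m-1$ — reaches the same conclusion in one shot, avoids the limits entirely, and makes explicit that only finitely many instances of condition (1) are actually needed; the paper's limiting argument, by contrast, is the more direct discrete echo of the approximation machinery of \S\ref{first}. Both are valid; yours is the more economical for the finite setting.

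One remark on your boundary bookkeeping: you are right that the fixed first edge contributes $\mathrm{e}^{i\theta_1}=1$ only to the $n=0$ equation, so that $G(0)=0$ reads $1+\sum_{1<j<N}\mathrm{e}^{i\theta_j}=0$ rather than $\sum_{1<j<N}\mathrm{e}^{i\theta_j}=0$ as condition (1) literally states at $n=0$. This is a wrinkle in the theorem's statement itself (the paper's own proof has the same $-1$ on the right at $n=0$), not a defect of your argument; your decision to retain the term and track it through the level $a=0$ of condition (2) is the careful reading.
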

\begin{proof}
The equivalence ($0$) $\Leftrightarrow$ ($1$) is deduced as in \S \ref{first} by taking the $n$-th derivative with respect to $\lambda$ of the constant function $1=-\sum_{1 < j < N} \mathrm{e}^{i(\theta_j+\lambda \phi_j)}$. Condition ($1$) is easily implied by ($2$), while for the opposite direction we observe that for all $n\in\mathbb{N}, a\in \mathbb{R} \setminus \{0\}$,
$$
\sum_{1 < j < N} \mathrm{e}^{i\theta_j} \bigg(\frac{\phi_j}{a}\bigg)^n=\frac{1}{a^n} \sum_{1 < j < N} \mathrm{e}^{i\theta_j} \phi_j^n = 0.
$$
If all $\phi_j$'s are equal to $0$, we are done since the polyline associated to the turning angles $\theta_j$ is closed. Otherwise, letting $A=\max_j \{ \abs{\phi_j} \}$, 
\begin{equation*}
\begin{split}
0=\lim_{n \to \infty} \sum_{1 < j < N} \mathrm{e}^{i\theta_j} \bigg(\frac{\phi_j}{A}\bigg)^{2n} = \sum_{j \in \phi^{-1}(A)} \mathrm{e}^{i\theta_j}+\sum_{j \in \phi^{-1}(-A)} \mathrm{e}^{i\theta_j}, \\
0=\lim_{n \to \infty} \sum_{1 < j < N} \mathrm{e}^{i\theta_j} \bigg(\frac{\phi_j}{A}\bigg)^{2n+1} = \sum_{j \in \phi^{-1}(A)} \mathrm{e}^{i\theta_j}-\sum_{j \in \phi^{-1}(-A)} \mathrm{e}^{i\theta_j}, 
\end{split}
\end{equation*}
which entails $\sum_{j \in \phi^{-1}(A)} \mathrm{e}^{i\theta_j}=\sum_{j \in \phi^{-1}(-A)} \mathrm{e}^{i\theta_j}=0$.
For $A'=\max_j \{ \abs{\phi_j} \mid \abs{\phi_j}<A  \}$, it holds analogously
\begin{equation*}
\begin{split}
0=\lim_{n \to \infty} \sum_{1 < j < N} \mathrm{e}^{i\theta_j} \bigg(\frac{\phi_j}{A'}\bigg)^n & = \lim_{n \to \infty} \sum_{j \not \in \phi^{-1}(\pm A)} \mathrm{e}^{i\theta_j}  \bigg(\frac{\phi_j}{A'}\bigg)^n+ \bigg(\frac{\pm A}{A'}\bigg)^n  \sum_{j \in \phi^{-1}(\pm A)} \mathrm{e}^{i\theta_j} \\
& = \sum_{j \in \phi^{-1}(A')} \mathrm{e}^{i\theta_j} \pm  \sum_{j \in \phi^{-1}(-A')} \mathrm{e}^{i\theta_j} +0,
\end{split}
\end{equation*}
and we conclude by iterating the same argument until we exhaust all the finitely many vertices of the polyline.
\end{proof}
In order to find non-trivial pairs such that the polyline associated to $(k_j)+\lambda(f_j)$ is closed for all $\lambda$, by Theorem \ref{discretetheorem} the polyline associated to $(k_j)$ must possess at least one proper subset $\bar{V} \subset \{2,...,N-1\}$ of indices that is \textit{balanced}, meaning $\sum_{j \in \bar{V}} \mathrm{e}^{i\theta_j}=0    $. A visualization of this behaviour is given in Figure \ref{polyline}.

\begin{figure}
\begin{tikzpicture}
    \node[anchor=south west,inner sep=0] at (0,0) {\scalebox{1}{\includegraphics[width=\textwidth]{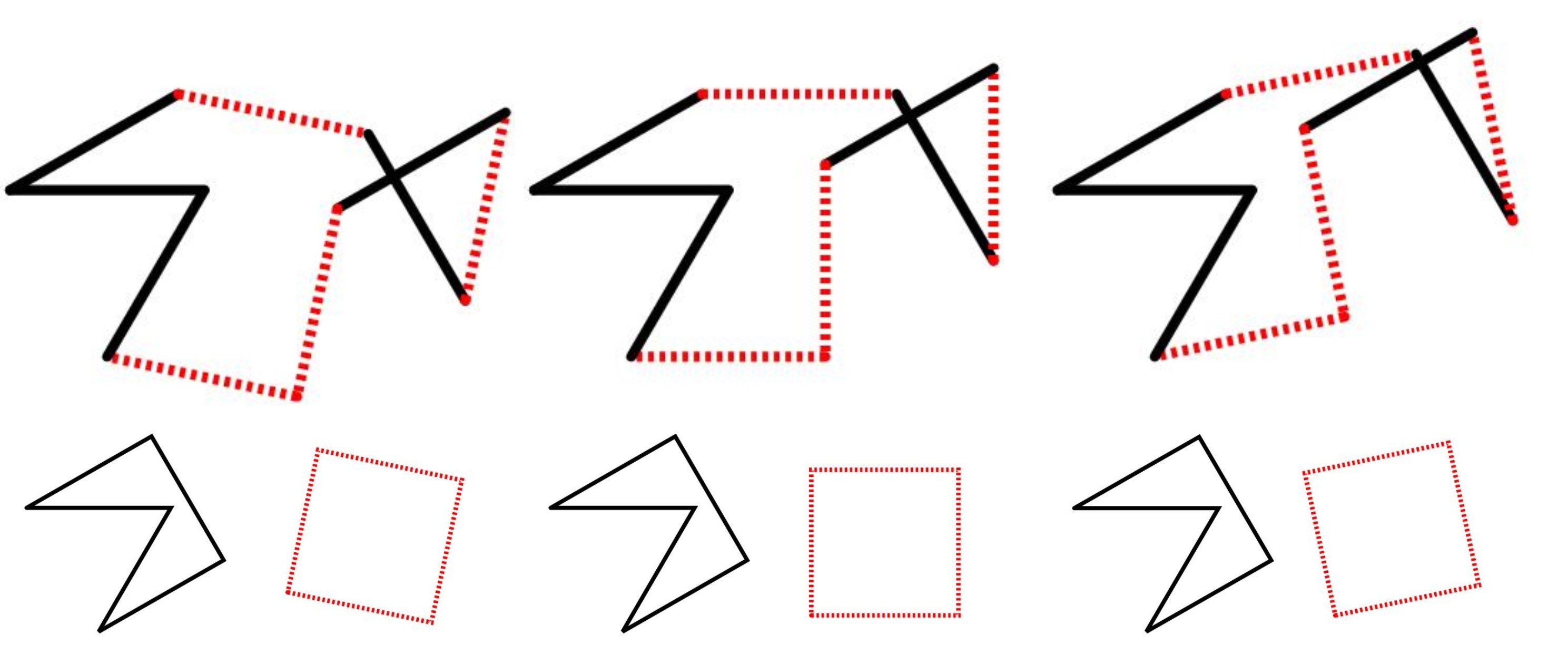}}};
     \node at (0.6,2.2) {$v_1,v_{10}$};
      \node at (1.95,3.7) {$v_2$};
      \node at (0.265,3.5) {$v_3$};
      \node at (1.5,4.75) {$v_4$};
      \node at (2.95,4.45) {$v_5$};
      \node at (3.8,2.61) {$v_6$};
      \node at (4.1,4.6) {$v_7$};
      \node at (3,3.4) {$v_8$};
      \node at (2.7,2.05) {$v_9$};
\end{tikzpicture}
\caption{The discrete curvature $(k_j)$ of a polyline is modified linearly in $\lambda$ to $(k_j)+\lambda(f_j)$, with $(f_j)= (0,0,\phi_1,-\phi_1,\phi_1,$ $-\phi_1, \phi_1,0)$. Such a curvature vector sums up to the turning angle $(0,0,\phi_1,0,\phi_1,0,\phi_1,\phi_1)$, which rotates, as $\lambda$ varies, only the dashed edges corresponding to a balanced subset of indices.} \label{polyline}
\end{figure}

Note that it is easy to construct polylines with no balanced proper subsets of edges. Consider for example $n$ copies of the pair of unit vectors summing up to $(\frac{1}{n},0)$ and either the vector $(-1,0)$ for a polyline with an odd number of edges or the two unit vectors whose sum is $(-1,0)$ for an even number. Any proper subset of vectors from the ``copies'' part either consists of a single vector or its elements sum up to a non-unit vector different from $0$. In both case it is not possible to counterbalance the sum with the vector(s) on the other side of the y-axis.     

\section{There is no ``sufficient'' 4-vertex theorem.} \label{nonex}

The 4-vertex theorem provides a necessary condition for a function to be the curvature of a closed planar curve without self-intersections (see \cite{docarmo} or \cite{converse4} for a comprehensive survey). In this section we use the results from \S \ref{first} to show in a rigorous way that it is not possible to develop a sufficient condition of the same nature, namely it is not possible to tell whether a curvature function $k$ belongs to an arc-length parametrized closed curve by computing finitely many level sets and evaluations of $k$, its derivatives and its antiderivatives. The way we want to do this is by first assuming that in the class of analytic functions on $I$ such a procedure exists, and to show afterwards that it is always possible to construct an instance for which such a procedure yields the wrong answer.

We need to formalize what we mean with \textit{procedure}; this is done by first introducing the objects involved one by one, pointing out at the same time their high level meaning. We consider the set of sequences $A=\big\{\{a_{j}\}_{j \in \mathbb{N}}\big\}$, such that $a_{j} \in \mathbb{R} \cup \Sigma$, with $\Sigma$ a finite alphabet of symbols; sequences of this type will be used to store the progress of our procedure. Then we fix $L=\{l_j\}$, a countable set of analytic functions on $I$; this family will generalize the concept of level sets and finite linear combinations of its elements will be considered, selecting coefficients in $C=\big\{\{c_j\}_{j \in \mathbb{N}_0} \big\}$, set of real sequences whose terms are $0$ for $j$ big enough and $c_0 \in I$ (this is a special term used for evaluating functions at a certain parameter). 

Given the real analytic function $k$ that we want to test, our procedure is determined by three functions $M,H_k,G$.
\begin{align*}
M&: & \; A &\rightarrow  \{E,\cap\} \times \mathbb{Z} \times C \\
H_k&: & \;  \{E,\cap\} \times \mathbb{Z} \times C  &\rightarrow  \{E,\cap\} \times \mathbb{Z} \times A \\
G&: & \; A \times \{E,\cap\} \times \mathbb{Z} \times A  &\rightarrow \{\text{YES,NO}\}  \cup A,
\end{align*}
where $M$ and $G$ can be chosen arbitrarily and represent the functioning of the procedure while $H_k$ depends on the input $k$ and is the only tool we have to extract information about $k$ in the way we are going to specify in the next paragraph.

For $\{a_j\}\in A$, initialized to $a_j=\square$ with $\square \in \Sigma$ for all $j$, we compute iteratively $G\bigl(\{a_j\},H_k\big(M(\{a_j\}\big)\bigl)$ until an answer YES, the curve with curvature $k$ integrates in the arc-length to a closed curve, or NO, it does not, is output. Concerning the function $H_k$, for $(\sigma,n,\{c_j\}) \in \{E,\cap\} \times \mathbb{Z} \times C$, we have 
$$
 H_k(\sigma,n,\{c_j\}) = (\sigma,n,\{a_j\}),
$$
$$
 \text{with } \{a_j\} \textit{ storing } \begin{cases*}
      \text{the evaluation } k^{(n)}(c_0),   & if $\sigma=E$, \\
      \text{the solution(s) of } k^{(n)}=\sum_{j\geq 1} c_j l_j, & if $\sigma=\cap$,
    \end{cases*}
$$
where $k^{(n)}$ denotes the $n$-th derivative of $k$ for $n$ positive, $k$ itself for $n=0$ and the $n$-th antiderivative of $k$ for $n$ negative, meant as the result of taking $-n$ times the operation $\int_0^t$. With \textit{storing} we mean just sequentially writing the result of the evaluation or the solution(s) of the equation if they exist and are finitely many, or using special symbols from $\Sigma$ if there are no solutions or the two functions coincide (these are the only remaining possibilities in an analytic regime as ours). Unused terms of the sequence are just filled with the blank symbol $\square$. 

To summarize, $M$ looks at the current sequence $\{a_j\}$ and determines what is the informations $H_k$ should extract from $k$ or one of its antiderivatives/derivatives. Then, $G$ considers the result output from $H_k$ together with a copy of $\{a_j\}$ and either decides an answer to the problem or rather merges the new information updating the sequence in $A$. As anticipated, in the following we assume for a contradiction that there exist functions $M$, $G$ and a set $L$ such that the procedure $\mathfrak{T}$ they define in the sense above is \textit{correct}, meaning that, for any analytic functions $k$ on $I$, it decides an answer $\mathfrak{T}(k)\in $\{YES,NO$\}$ after finitely many iterations and that $\mathfrak{T}(k)=$YES if and only if the arc-length parametrized curve with curvature $k$ is closed.

\begin{figure}
\begin{tikzpicture}
\node[] at (0,0) {$k$};
\draw[very thick,->] (0.4,0) -- (1,0);
 \draw[very thick, ->] (4.75,1) arc (-45:225:1cm);
\node[align=left] at (4.1,1.7) {Finitely\\ many\\ times};
\node[draw,text width=5.2cm] at (4,0) {Compute an evaluation or a level set of either $k$, one of its derivatives or one of its antiderivatives.};
\draw[very thick,->] (7,0.1) -- (7.4,0.45);
\node[] at (7.9,0.45) {YES};
\draw[very thick,->] (7,-0.1) -- (7.4,-0.45);
\node[] at (7.9,-0.45) {NO};
\end{tikzpicture}
\caption{High level diagram of the decision procedure. Given a curvature function $k$, finitely many evaluations and/or level set of $k$, its derivatives and its antiderivatives are computed to output the answer YES, the associated curve is closed, or NO, it is not.} \label{diagproc}
\end{figure}
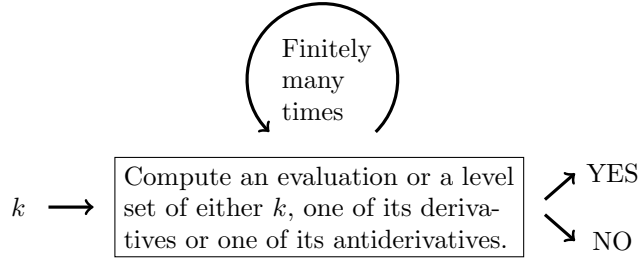

Our strategy is to perturbate the function $k$ of a closed curve to $k+\lambda p$, with $p$ another analytic function on $I$ and $\lambda$ a real number such that still $\mathfrak{T}(k+\lambda p)=$YES, but this time the associated curve is not closed anymore, entailing a contradiction. 
\begin{lemma} \label{lemma4vert}
If $\mathfrak{T}$ is a correct procedure, then there exist $k,P$ non-constant analytic functions on $I$ and $N\in\mathbb{N}$ such that $\mathfrak{T}\bigl(k+\lambda (\psi\cdot P)^{(N)}\bigl)=$YES for any $\psi$ analytic on $I$ and for all $\lambda \in \mathbb{R}$.
\end{lemma}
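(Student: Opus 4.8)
The plan is to exploit that a correct $\mathfrak{T}$, by definition, halts after finitely many iterations and therefore extracts only finitely much information about its input. I would start by running $\mathfrak{T}$ on a fixed closed curve with curvature $k$ (so that $\mathfrak{T}(k)=$YES) and recording the finite transcript: the finitely many evaluation probes $k^{(n)}(c_0)$ and level-set probes $k^{(n)}=\sum_{j} c_j l_j$ that are actually performed. Let $S\subset I$ be the finite set consisting of all evaluation points $c_0$, all solution points returned by the level-set probes, and the two boundary points $0,2\pi$, and let $m$ be the largest order $|n|$ queried. The governing principle is an \emph{invariance} observation: since each query is determined by the answers to the previous ones, a perturbation that leaves every recorded answer unchanged leaves the whole computation path, hence the output, unchanged. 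So it suffices to perturb $k$ in a way that is invisible to these finitely many probes.

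Next I would choose $P$ and $N$. Taking $P$ analytic and non-constant but vanishing to order greater than $m+N$ at every point of $S$, the Leibniz expansion of $(\psi P)^{(N)}$ shows that $(\psi P)^{(N)}$ together with all of its derivatives and antiderivatives of order $\le m$ vanish at each point of $S$, \emph{for every} $\psi$: the high-order vanishing at $0$ kills the base-point terms that the iterated $\int_0^t$ in the antiderivatives would otherwise produce, while the vanishing at $2\pi$ keeps the total turning-angle change equal to zero, consistent with the boundary analysis of \S\ref{per}. Consequently every \emph{evaluation} probe returns the same value on $k$ and on $k+\lambda(\psi P)^{(N)}$ for all $\psi$ and all $\lambda$, and each solution point already recorded for a level-set probe persists as a solution. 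Thus every member of the family follows the computation of $k$ at least until the first level-set probe.

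The delicate point — which I expect to be the main obstacle — is precisely the level-set probes. Although the recorded solutions persist, for large $|\lambda|$ the perturbation can create \emph{new} solutions of $k^{(n)}=\sum_j c_j l_j$ off the set $S$, and an elementary argument shows that preserving a level set for \emph{all} $\lambda$ would force the relevant derivative of the perturbation to vanish identically, which is impossible for arbitrary $\psi$. I would therefore not insist that a member share the transcript of $k$, but instead, for each fixed member $k'=k+\lambda(\psi P)^{(N)}$, produce a genuinely closed curve indistinguishable from it. Concretely, run $\mathfrak{T}(k')$, collect its finite transcript, and add to $k'$ a further analytic correction $e$ vanishing to high order at all probe points of $k'$ and small enough in $C^m$ there that no evaluation answer changes and no new level-set solution is created, while using the remaining freedom of $e$ on a subinterval disjoint from those points to cancel the closedness defect $\int_0^{2\pi}\mathrm{e}^{i\int_0^t k'}\mathrm{d}t$, which is uniformly bounded by $2\pi$. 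Then $k'+e$ is closed with the same transcript as $k'$, so correctness of $\mathfrak{T}$ gives $\mathfrak{T}(k')=\mathfrak{T}(k'+e)=$YES.

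The technical heart is thus the simultaneous control in this last step: keeping $e$ invisible to every probe — in particular keeping the high-order derivatives $e^{(n)}$ below $\min\lvert k'^{(n)}-\sum_j c_j l_j\rvert$ away from the solution neighbourhoods — while still moving the closure functional by the required, uniformly bounded, amount. I would spread $e$ over a fixed interval avoiding $S$ and parametrize it by finitely many low-frequency modes, so that the two real closure conditions are met by a degree or implicit-function argument with a correction whose relevant derivatives stay controlled. The choice of $k,P,N$ is then only constrained to make the family $\{k+\lambda(\psi P)^{(N)}\}$ rich enough to contain non-closed members, which is what makes the lemma useful: by the analyticity of $F(\lambda)=\int_0^{2\pi}\mathrm{e}^{i(\theta+\lambda\phi)}\mathrm{d}t$ from \S\ref{first} and the Corollary to Theorem \ref{thmcond}, unless $(\psi P)^{(N)}$ satisfies condition (1) the set of $\lambda$ yielding a closed curve has no accumulation point, so generic $\psi$ indeed produce open members for the final contradiction.
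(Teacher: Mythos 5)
Your first half tracks the paper: run $\mathfrak{T}$ on the curvature $k$ of a closed curve, record the finite transcript, and build $P$ vanishing to high order at the probe points (the paper takes $P(t)=t^N\prod_j(t-t_j)^{2N+D}$, the factor $t^N$ being exactly your ``kill the base-point terms of the antiderivatives'' device). But you then abandon this route at the point where the real work happens, and your replacement has a genuine gap. For large $\lambda$ the transcript of $k+\lambda(\psi P)^{(N)}$ need not match that of $k$, and your fix --- for each member $k'$ of the family, run $\mathfrak{T}(k')$ and add an analytic correction $e$ invisible to the transcript of $k'$ while cancelling its closedness defect --- is not established and is in quantitative tension with itself. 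Invisibility to a level-set probe $k'^{(n)}=\sum_j c_j l_j$ forces $e^{(n)}$ to vanish at the recorded roots \emph{and} to stay below $|k'^{(n)}-\sum_j c_j l_j|$ on the rest of $I$; this bounds $\sup_I|\int_0^t e|$ and hence bounds how far $\int_0^{2\pi}\mathrm{e}^{i\int_0^t(k'+e)}\mathrm{d}t$ can be moved, while the defect to be cancelled can be of order $2\pi$. Nothing in your ``degree or implicit-function argument'' resolves this; note also that this step uses nothing about $k'$ beyond analyticity, so if it worked it would show that a correct $\mathfrak{T}$ answers YES on \emph{every} analytic input --- a far stronger claim that you would then have to actually prove, not sketch. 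You also omit the hypothesis that makes the transcript finite and stable in the first place: $k$ must be chosen so that no $k^{(n)}$ equals a finite combination of elements of $L$, otherwise a probe can return ``the two functions coincide'', an answer destroyed by every perturbation.

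The paper's resolution of the large-$\lambda$ difficulty is much shorter. Transcript-invariance is only needed for $|\lambda|$ small, where it does hold for every $\psi$ (the recorded roots persist with their multiplicities because $P$ vanishes to order $2N+D$ at each $t_j$, and no new roots appear by a compactness bound away from them). Correctness then says the curve with curvature $k+\lambda(\psi P)^{(N)}$ is genuinely closed for all small $\lambda$; since $F(\lambda)=\int_0^{2\pi}\mathrm{e}^{i(\theta+\lambda\phi)}\mathrm{d}t$ is analytic in $\lambda$ (\S\ref{first}) and vanishes on a set with an accumulation point, it vanishes identically, so the curve is closed for \emph{all} $\lambda$, and correctness converts this back into $\mathfrak{T}(k+\lambda(\psi P)^{(N)})=\text{YES}$ for all $\lambda$. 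You cite exactly this analyticity at the end of your proposal, but only to argue that the family contains non-closed members, which is the job of the Theorem rather than of this Lemma; used in the other direction it closes your gap and makes the correction $e$ unnecessary.
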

\begin{proof}
We start by considering the curvature function $k$ of a closed curve such that $k$, its derivatives and its antiderivatives are independent from the set $L$, \textit{i.e.} there is no finite combination of elements in $L$ that equals to any of them. Calling $\theta=\int k$, this can be done for example by observing that the condition $\int \mathrm{e}^{i\theta}=0$ for a curve to be closed allows a family with the cardinality of the continuum of independent functions or, more explicitely, by using the existence result from \S \ref{secexistence} and condition (2) of Theorem \ref{thmcond} to construct a closed curve with turning angle $\theta+g(\phi)$ with $g$ an appropriate analytic function that guarantees the independency from $L$.   

We construct then the set of triples $S=\{(t_j,d_j,n_j)\} \subset I\times \mathbb{N}\times\mathbb{Z}$, where the $t_j$'s are the single roots of the equations involving $k^{(n_j)}$ that the procedure solved to conclude the answer YES, and the $d_j$'s the respective degrees of such roots. At the same time, we put in $S$ also the triples $(t,1,n)$ if an evaluation of $k^{(n)}$ has been computed at $t$ over the run. Since $k$ has been chosen independent from $L$, the set $S$ is the complete record of what information has been extracted from $k$ by $H_k$. Calling $D:=\max_j \{d_j\}$ and $N:=\max_j \{|n_j|\}$, we define the polynomial
$$
P(t)=t^N \prod_j (t-t_j)^{2N+D}.
$$   
By construction, $k$ and $P$ as above satisfy, for all $\psi$ and all $\lambda$ small enough, $\mathfrak{T}\bigl(k+\lambda (\psi\cdot P)^{(N)}\bigl)=$YES; in fact, for small perturbations, $\mathfrak{T}$ performs exactly the same sequance of iterations and therefore ouputs the same result. In \S \ref{first} we saw that $F(\lambda)=\int_0^{2\pi} \mathrm{e}^{\bigl(i(k+\lambda (\psi\cdot P)^{(N)}\bigl)}$ is analytic in $\lambda$ and therefore, if the procedure is correct as we assumed, it actually holds $\mathfrak{T}\bigl(k+\lambda (\psi\cdot P)^{(N)}\bigl)=$YES for all $\lambda \in \mathbb{R}$.
\end{proof}

We are ready to prove the theorem promised at the beginning of the section.

\begin{theorem}
There is no sufficient 4-vertex theorem, \textit{i.e.} there is no correct procedure to determine with finitely many iterations whether the curve associated to the curvature function $k$ is closed by computing finitely many evaluations and/or generalized level sets of $k$, its derivatives and its antiderivatives.
\end{theorem}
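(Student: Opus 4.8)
The plan is to argue by contradiction, feeding the family produced by Lemma \ref{lemma4vert} into the assumed correctness of $\mathfrak{T}$. Suppose such a correct procedure existed. Lemma \ref{lemma4vert} then furnishes non-constant analytic functions $k,P$ and an integer $N$ (we may assume $N\geq 1$), with $k$ the curvature of a closed curve, such that $\mathfrak{T}\bigl(k+\lambda(\psi P)^{(N)}\bigr)=$YES for every analytic $\psi$ and every $\lambda\in\mathbb{R}$. Since $\mathfrak{T}$ is correct, this forces the curve with curvature $k+\lambda(\psi P)^{(N)}$ to be closed for all $\psi$ and all $\lambda$. Fixing $\psi$ and letting $\lambda$ vary, the equivalence of conditions (0) and (1) in Theorem \ref{thmcond} shows that the pair $(k,(\psi P)^{(N)})$ satisfies $\int_0^{2\pi}\mathrm{e}^{i\theta}\phi_\psi^{\,n}\,\mathrm{d}t=0$ for every $n$, where $\phi_\psi(t)=\int_0^t(\psi P)^{(N)}$. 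The entire contradiction will come from the single instance $n=1$, which must then hold for every analytic $\psi$; I will exhibit one $\psi$ that violates it.

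The key computation is to make this linear constraint transparent. Because $P$ has a zero of order $N$ at the origin, $(\psi P)^{(N-1)}(0)=0$, so $\phi_\psi=(\psi P)^{(N-1)}$. I would then specialize to perturbations of the form $\psi(t)=(t-2\pi)^{N-1}h(t)$ with $h$ an arbitrary analytic function, guaranteeing that $\psi P$ also vanishes to order at least $N-1$ at $t=2\pi$. Integrating by parts $N-1$ times to move all derivatives onto $\mathrm{e}^{i\theta}$, the boundary contributions at $0$ and at $2\pi$ all drop out (the relevant derivatives of $\psi P$ vanish at both endpoints), leaving
\[
\int_0^{2\pi}\mathrm{e}^{i\theta}\phi_\psi\,\mathrm{d}t = (-1)^{N-1}\int_0^{2\pi}\bigl(\mathrm{e}^{i\theta}\bigr)^{(N-1)}(t)\,(t-2\pi)^{N-1}P(t)\,h(t)\,\mathrm{d}t.
\]
If the left-hand side vanished for every such $\psi$, then this integral would vanish for all analytic $h$; by density of polynomials in $C^0(I)$ the continuous weight $\bigl(\mathrm{e}^{i\theta}\bigr)^{(N-1)}(t)(t-2\pi)^{N-1}P(t)$ would be identically zero, and since $(t-2\pi)^{N-1}P$ is a nonzero polynomial this forces $\bigl(\mathrm{e}^{i\theta}\bigr)^{(N-1)}\equiv 0$.

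To close the argument I would observe that $\bigl(\mathrm{e}^{i\theta}\bigr)^{(N-1)}\equiv 0$ makes $\mathrm{e}^{i\theta(t)}$ a polynomial of degree at most $N-2$; writing it as $p(t)$, the identity $p(t)\overline{p(t)}=1$ on $I$ holds between two polynomials, which is possible only when $p$ is constant. Hence $\theta$ would be constant and $k\equiv 0$, contradicting the non-constancy of $k$ supplied by Lemma \ref{lemma4vert}. Therefore some admissible $\psi$ makes $\int_0^{2\pi}\mathrm{e}^{i\theta}\phi_\psi\,\mathrm{d}t\neq 0$; the corresponding pair fails condition (1), so for a suitable $\lambda$ the curve with curvature $k+\lambda(\psi P)^{(N)}$ is not closed even though $\mathfrak{T}$ returns YES, the desired contradiction. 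I expect the main obstacle to be precisely the control of the boundary terms in the integration by parts: the endpoint $2\pi$ is in general not a zero of $P$, so one cannot simply discard those terms, and the device of restricting to perturbations $\psi$ that vanish to order $N-1$ at $2\pi$ --- while still leaving enough freedom in $h$ to run the density argument --- is what makes the reduction to $\bigl(\mathrm{e}^{i\theta}\bigr)^{(N-1)}\equiv0$ go through.
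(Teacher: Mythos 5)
Your proof is correct, but it takes a genuinely different route from the paper's. The paper also starts from Lemma \ref{lemma4vert}, but then derives the contradiction from the level-set condition (\ref{cond3}): it picks $\bar{t}$ with $P(\bar{t})\neq 0$, approximates concentrated triangle functions by polynomials via Stone--Weierstrass, and chooses $\psi$ so that $(\psi\cdot P)^{(N)}$ is essentially a bump localized near $\bar{t}$; then $\phi$ attains its extreme values only near $\bar{t}$, so the sum $\sum_{b\in\phi^{-1}(a)}\mathrm{e}^{i\theta(b)}/|\phi'(b)|$ over a level set near the maximum consists of terms that cannot cancel. Your argument instead stays entirely inside condition (1) of Theorem \ref{thmcond} and uses only the linear ($n=1$) moment together with the freedom in $\psi$: restricting to $\psi=(t-2\pi)^{N-1}h$ kills the boundary terms in the integration by parts (the factor $t^N$ in $P$ handles the endpoint $0$, your extra factor handles $2\pi$), and the density of polynomials reduces everything to $(\mathrm{e}^{i\theta})^{(N-1)}\equiv 0$, which is absurd since $|\mathrm{e}^{i\theta}|=1$ forces $\mathrm{e}^{i\theta}$ to be a non-constant-free polynomial identity $p\bar{p}=1$. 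This buys a cleaner and more quantitative contradiction: it avoids the bump-function estimates, and it does not rely on (\ref{cond3}), which in the paper is only an implication of (1) and whose derivation carries regularity caveats. The one point you should justify rather than assert is ``we may assume $N\geq 1$'': the exponent $N$ is fixed by Lemma \ref{lemma4vert} as $\max_j|n_j|$ and could a priori be $0$. This is easily repaired --- either observe that the construction in the lemma's proof only uses $N\geq\max_j|n_j|$, so one may enlarge $N$ to $\max(1,\max_j|n_j|)$, or treat $N=0$ directly (integrate by parts in the opposite direction using $\int_0^{2\pi}\mathrm{e}^{i\theta}=0$ to kill the boundary term, which yields $\mathrm{e}^{i\theta}P\equiv 0$ even more quickly) --- but as written it is a small gap.
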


\begin{proof}
Let $k$ and $P$ be chosen as in Lemma \ref{lemma4vert}. We construct $\psi$ for which it is apparent that condition (\ref{cond3}) cannot hold for the pair $\bigl(k,(\psi\cdot P)^{(N)}\bigl)$, therefore obtaining a contradiction. We choose $\bar{t}$ where $P(\bar{t})\neq0$  and consider the family of \textit{triangle functions} $T_{\delta}$, attaining the value $0$ outside the interval $[\bar{t}-\delta,\bar{t}+\delta]$ and linearly interpolating the value $T_{\delta}(\bar{t})=1/\delta$.
By the Stone-Weierstrass theorem, for $\varepsilon>0$, we can find a polynomial $h_{\varepsilon,\delta}$ such that 
$$
\sup_{I} |h_{\varepsilon,\delta}-T_{\delta}|< \varepsilon \text{ and hence } \sup_{I} \bigl|h_{\varepsilon,\delta}^{(-j)}-T_{\delta}^{(-j)}\bigl|< \varepsilon(2\pi)^{j}, \; \forall j\in\mathbb{N}, 
$$
where the superscript $(-j)$ means the $j$-th antiderivative $\int_0^t$ of a function. For $j \geq 1$, it holds 
$$
\sup_{I} h_{\varepsilon,\delta}^{(-j)} < (2\pi)^{j-1} + \varepsilon(2\pi)^{j}.
$$

We consider now
\begin{equation} \label{derivatives}
\bigl(h_{\varepsilon,\delta}^{(-N)} \cdot P\bigl)^{(N)}=h_{\varepsilon,\delta}P+\sum_{1\leq j\leq N} \binom{N}{j} h_{\varepsilon,\delta}^{(-j)} P ^{(j)}.
\end{equation}
With $M=\max_{1\leq j \leq N}\sup_{I} P^{(j)}$, we see that the second term of (\ref{derivatives}) is bounded by $\sum_{1\leq j\leq N} \binom{N}{j} (2\pi)^{j}(1+\varepsilon)M$, which does not depend on $\delta$. Choosing $\psi=h_{\varepsilon,\delta}$ with $\varepsilon$ and $\delta$ small enough, the maxima on $I$ of $\bigl(h_{\varepsilon,\delta}^{(-N)} \cdot P\bigl)^{(N)}$ are all contained in an arbitrarily small neighborhod of $\bar{t}$. This makes it  impossible to satisfy condition (\ref{cond3}), which is the contradiction we needed to conclude the theorem.

\end{proof}

\begin{remark} Showing the impossibility of a sufficient analogue of the 4-vertex theorem cannot be reduced to a cardinality argument. For example, if we are just interested in constructing a procedure as the one described at the beginning of the section for curvature functions over $I$ that are $\pi$-periodic and such that $|k^{(-1)}(s)| \leq \pi$ for $s\leq \pi$, then the associated curve is closed if and only if $k^{(-1)}(\pi)=\pi$ and it is therefore enough to compute such an evaluation to conclude the correct answer. The interplay between closed curves and periodic curvature functions has been characterized in \cite{MR2408486}.
\end{remark}

\subsection*{Future work.} Given the curvature function $k$ of a closed curve, when is it possible to find $f$ such that the curve associated to $k+\lambda f$ is closed for all $\lambda$? In \S \ref{secexistence} we identified a class of pairs of functions that satisfies this condition and the next obvious step would be a full characterization in the $C^h$ and analytic setting. Thinking in terms of the turning angle $\theta=\int k$, a possible way of approaching the problem could be by synthesizing a Fourier series  for $\phi$ that would satisfy the family of orthogonality relations $\int \mathrm{e}^{i\theta}\phi^n=0$ in $L^2$. The ugliness of the convolution formula for the Fourier coefficients of a product prevented the author from succeeding. 

Another nice improvement would be the generalization of the periodicity result from \S \ref{per} to the case $\phi(0)$ being a critical value. This would also make the proof of the non-existence of a ``sufficient'' 4-vertex theorem in \S \ref{nonex} more agile.

\section*{Acknowledgments.}
The author acknowledges the support of the Austrian Science Fund (FWF): W1230, ``Doctoral Program Discrete Mathematics'' and of SFB-Transregio 109 ``Discretization in Geometry \& Dynamics'' funded by DFG and FWF (I2978).

The author would also like to thank Johannes Wallner and Felix Dellinger for helpful discussions and the community of \textit{MathOverflow} for useful hints.

\bibliography{bibliography} 

\providecommand{\bysame}{\leavevmode\hbox to3em{\hrulefill}\thinspace}
\providecommand{\MR}{\relax\ifhmode\unskip\space\fi MR }
\providecommand{\MRhref}[2]{%
  \href{http://www.ams.org/mathscinet-getitem?mr=#1}{#2}
}
\providecommand{\href}[2]{#2}
\begin{thebibliography}{1}

\bibitem{alese2020}
Leonardo Alese, \emph{Closing curves by rearranging arcs}, preprint
  arXiv:2002.05422 (2020).

\bibitem{MR2408486}
J.~Arroyo, O.~J. Garay, and J.~J. Menc\'{\i}a, \emph{When is a periodic
  function the curvature of a closed plane curve?}, Amer. Math. Monthly
  \textbf{115} (2008), no.~5, 405--414. \MR{2408486}

\bibitem{flows}
Keenan Crane, Ulrich Pinkall, and Peter Schr\"{o}der, \emph{Robust fairing via
  conformal curvature flow}, ACM Trans. Graphics \textbf{32} (2013), no.~4,
  1--10.

\bibitem{converse4}
Dennis DeTurck, Herman Gluck, Daniel Pomerleano, and David~Shea Vick, \emph{The
  four vertex theorem and its converse}, Notices Amer. Math. Soc. \textbf{54}
  (2007), no.~2, 192--207. \MR{2285124}

\bibitem{docarmo}
Manfredo~P. do~Carmo, \emph{Differential geometry of curves \& surfaces}, Dover
  Publications, Inc., Mineola, NY, 2016. \MR{3837152}

\bibitem{fenchel}
Werner Fenchel, \emph{On the differential geometry of closed space curves},
  Bull. Amer. Math. Soc. \textbf{57} (1951), 44--54. \MR{40040}

\bibitem{harmonic}
Yitzhak Katznelson, \emph{An introduction to harmonic analysis}, third ed.,
  Cambridge Mathematical Library, Cambridge University Press, 2004.
  \MR{2039503}

\bibitem{curvbased}
Marianna Saba, Teseo Schneider, Kai Hormann, and Riccardo Scateni,
  \emph{Curvature-based blending of closed planar curves}, Graphical models
  \textbf{76} (2014), no.~5, 263--272.

\bibitem{metam}
Tatiana Surazhsky and Gershon Elber, \emph{Metamorphosis of planar parametric
  curves via curvature interpolation}, Int. J. Shape Modeling \textbf{8}
  (2002), no.~02, 201--216.

\end{thebibliography}
\bibliographystyle{amsplain}

\end{document}